\theoremstyle{plain}
\newtheorem*{theorem*}{Theorem}
\newtheorem*{lemma*} {Lemma}
\newtheorem*{corollary*} {Corollary}
\newtheorem*{proposition*}{Proposition}
\newtheorem*{conjecture*}{Conjecture}
\newtheorem{theorem}{Theorem}[section]
\newtheorem{lemma}[theorem]{Lemma}
\newtheorem*{theorem1*}{Theorem 1}
\newtheorem*{theorem2*}{Theorem 2}
\newtheorem*{theorem3*}{Theorem 3}
\newtheorem{proposition}[theorem]{Proposition}
\newtheorem{ther}{Theorem}
\newcommand\rg{\mathrm{rg}\,}
\newcommand{\co}{\colon \thinspace}
\newtheorem{cor}[ther]{Corollary}
\theoremstyle{remark}
\newtheorem*{remark}{Remark}
\newtheorem{example*}{Example}
\newtheorem*{claim}{Claim}
\theoremstyle{definition}
\newcommand{\lmfrac}[2]{\mbox{\small$\displaystyle\frac{#1}{#2}$}}
\def\op{\operatorname}
\def\G{\Gamma}
   \def\Z{\Bbb{Z}} \def\R{\Bbb{R}} 
\def\N{\Bbb{N}}   \def\ll{\langle} \def\rr{\rangle}
 \def\a{\alpha}   \def\bp{\begin{pmatrix}}
 \def\ep{\end{pmatrix}} \def\bn{\begin{enumerate}} 
   \def\en{\end{enumerate}}
\def\ba{\begin{array}} \def\ea{\end{array}}  
\def\hom{\op{Hom}}
  \def\a{\alpha}  
\def\id{\mbox{id}}   
  \def\Ker{\mbox{Ker}}
\def\ker{\mbox{Ker}}\def\be{\begin{equation}} \def\ee{\end{equation}} 
 \def\hom{\mbox{Hom}}
\def\zt{\Z[t^{\pm 1}]}    \def\rk{\op{rk}}
\begin{document}
\title{Rank gradients of infinite cyclic covers of K\"ahler manifolds}

\author{Stefan Friedl}
\address{Fakult\"at f\"ur Mathematik, Universit\"at Regensburg, Germany}
\email{sfriedl@gmail.com}

\author{Stefano Vidussi}
\address{Department of Mathematics, University of California,
Riverside, CA 92521, USA} \email{svidussi@ucr.edu} 
\date{\today}

\begin{abstract} Given a K\"ahler group $G$ and a primitive class $\phi\in H^1(G;\Z)$, we show that the rank gradient of $(G,\phi)$ is zero if and only if $\ker~ \phi \leq G$ finitely generated. Using this approach, we give a quick proof of the fact (originally due to Napier and Ramachandran) that K\"ahler groups are not properly ascending or descending HNN extensions. Further  investigation of the properties of Bieri--Neumann--Strebel invariants of K\"ahler groups allows us to show that a large class of groups of orientation--preserving PL homeomorphisms of an interval (customarily denoted $F(\ell,\Z[\frac{1}{n_1\cdots n_k}],\langle n_1,\dots,n_k\rangle)$), which generalize Thompson's group $F$, are not K\"ahler.
\end{abstract}

\subjclass[2010]{53C55, 20F65}
\keywords{K\"ahler groups, rank gradient, Thompson--type groups}

\maketitle

\section{Introduction}

In \cite{Lac05 } M. Lackenby introduced the notion of \textit{rank gradient} $\rg(G,\{ G_i \})$ associated to a \textit{lattice}  $\{ G_i \}$ of finite index subgroups $G_{i} \leq_{f} G$ (i.e. a collection of subgroups which is closed with respect to intersection). This rank gradient is defined as \begin{equation} \label{eq:rg} \rg(G,\{ G_i \}) =  \liminf_{i\to \infty} \frac{\rk(G_i)}{[G:G_{i}]}, \end{equation}
where given a group $H$ we denote by $\rk(H)$ the rank of $H$, i.e.\ the minimal number of generators.
 Lackenby computed the rank gradient for various classes of groups and he illustrated  its r\^ole in understanding properties of the lattice and, in turn, of the group $G$ itself. Successive work of many authors has related this notion with various invariants of the group $G$.

The explicit calculation of the rank gradient appears rarely straightforward, and usually requires some relatively sophisticated information on the group. 

Here we will focus on the rank gradient defined by lattices associated to the infinite cyclic quotients of $G$, namely collections of the type \[ \{G_i=\ker(G \xrightarrow{\phi}\Z\to \Z_i)\} \] determined by a primitive class $\phi \in H^{1}(G;\Z) =\hom(G,\Z)$; this rank gradient will be denoted, for simplicity, $\rg(G,\phi)$.

One example where this calculation can be carried out in some detail is that of the fundamental group $\pi = \pi_{1}(N)$ of a $3$--manifold $N$ with $b_{1}(N) \geq 1$:  in \cite{DFV14} the authors proved that $\rg(\pi,\phi)$ vanishes if and only if $\phi$ is represented by a locally--trivial fibration of $N$ over $S^1$ with surface fibers. More  precisely $\rg(\pi,\phi) = 0$ if and only if  $\ker~ \phi \leq \pi$ is finitely generated, which is equivalent  to what is stated above by virtue of Stallings' Fibration  Theorem (see \cite{S62}). While one direction of the proof is straightforward, the other originates from some rather deep properties of $3$--manifold groups, which can be interpreted as a consequence of the separability results of Agol \cite{Ag13} and Wise \cite{Wi12}.

In this paper, we will show that a similar result holds for fundamental groups of K\"ahler manifolds, or K\"ahler groups.
 (The standard reference for the study of this class of groups is \cite{ABC$^+$96}; more recent results are surveyed in \cite{Bu11}.) Much like in  the case of $3$--manifold groups, the non--straightforward direction of the proof of our main theorem will come from suitably exploiting nontrivial properties of K\"ahler groups. Interestingly enough, the key tool here will also be separability, this time for the fundamental groups of surfaces. 

For sake of notation, we define a \textit{K\"ahler pair} to be the pair $(G,\phi)$ where $G$ is a K\"ahler group and $\phi \in H^{1}(G;\Z) = \hom(G,\Z)$ is a primitive (i.e. surjective) cohomology class. 
Our main theorem is the following:

\begin{ther} \label{thm:mainproof}
Let $(G,\phi)$ be a  K\"ahler pair. Then $\rg(G,\phi) = 0$ if and only if $\ker~\phi \leq G$ is finitely generated.
\end{ther}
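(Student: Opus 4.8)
The plan is to prove the two implications separately, with the easy direction first.

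The plan is to prove the two implications separately, disposing of the forward direction by elementary group theory and concentrating the real work on the converse, exactly as in the $3$--manifold case of \cite{DFV14}. For the forward direction, suppose $\ker\phi \le G$ is finitely generated, say on $d$ generators. Since $G$ is a K\"ahler group it is finitely generated, and as $\phi$ is surjective we have, for each $i$, a short exact sequence $1 \to \ker\phi \to G_i \to i\Z \to 1$, where $G_i := \phi^{-1}(i\Z)$ is the index--$i$ subgroup of the lattice. Because $i\Z$ is infinite cyclic this sequence splits, so $G_i$ is generated by the $d$ generators of $\ker\phi$ together with a single element mapping onto a generator of $i\Z$; hence $\rk(G_i) \le d+1$ for all $i$. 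Dividing by $[G:G_i]=i$ gives $\rk(G_i)/i \le (d+1)/i \to 0$, so $\rg(G,\phi)=0$. This direction uses no K\"ahler hypothesis.

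For the converse I would argue contrapositively: assuming $\ker\phi$ is \emph{not} finitely generated, I want to produce a definite positive lower bound for $\rg(G,\phi)$. This is where the K\"ahler hypothesis must enter, through the structure theory of the Bieri--Neumann--Strebel invariant. For a K\"ahler group, a primitive class whose kernel is infinitely generated (equivalently, with neither $\phi$ nor $-\phi$ lying in $\Sigma^1(G)$) is necessarily pulled back from a hyperbolic orbifold: there is a surjection $q \co G \onto Q$ onto the orbifold fundamental group $Q = \pi_1^{\mathrm{orb}}(C)$ of a closed $2$--orbifold $C$ with $\chi(C) < 0$, and a surjective $\psi \in H^1(Q;\Z)$ with $\phi = \psi \circ q$. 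Establishing this factorization is the deep input and the main obstacle; it rests on the theory of holomorphic pencils on K\"ahler manifolds together with separability of surface groups, the latter playing the r\^ole that the Agol--Wise machinery plays in the $3$--manifold setting.

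Granting the factorization, the conclusion is a soft two--step estimate. First, monotonicity of rank under the quotient: setting $Q_i := \psi^{-1}(i\Z)$, the identity $G_i = q^{-1}(Q_i)$ together with surjectivity of $q$ gives $q(G_i) = Q_i$, so $G_i$ surjects onto $Q_i$ and $\rk(G_i) \ge \rk(Q_i)$; dividing by $i$ and passing to the $\liminf$ yields $\rg(G,\phi) \ge \rg(Q,\psi)$. Second, I would show $\rg(Q,\psi) > 0$ from the geometry of $C$: each $Q_i$ is a finite--index subgroup of the orbifold surface group $Q$, hence again one, with $\chi(Q_i) = i\,\chi(Q)$, so its rank---bounded below by its first Betti number---grows linearly in $i$, forcing $\rg(Q,\psi) > 0$ (passing to an honest surface cover when $C$ carries cone points is again where surface--group separability is convenient). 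Combining the two inequalities gives $\rg(G,\phi) > 0$, which is the desired contrapositive. I expect essentially all of the difficulty to sit in the orbifold--pencil factorization of the second paragraph; once that structural statement is in hand, the rank--gradient estimate is purely formal.
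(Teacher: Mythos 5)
Your skeleton is the same as the paper's: the splitting argument for the ``if'' direction (identical), and for the converse the Napier--Ramachandran factorization of $\phi$ through a cocompact Fuchsian group $Q=\pi_1^{\mathrm{orb}}(C)$, followed by a rank-growth estimate in that quotient; your reduction $\rg(G,\phi)\geq \rg(Q,\psi)$ is also fine. The genuine gap is in the last step, where you claim that $\rk(Q_i)$ is bounded below by $b_1(Q_i)$, which ``grows linearly in $i$'' because $\chi(Q_i)=i\,\chi(Q)$. That implication fails when $C$ has cone points. Every cone-point generator is torsion in $Q$, hence maps to $0$ under $\psi$ and lies in \emph{every} $Q_i$; consequently each cone point of $C$ lifts to $i$ cone points of the same order in the cyclic cover $C_i$, and the underlying closed surfaces satisfy $\chi(S_i)=i\,\chi(S)$. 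So if the underlying surface $S$ of $C$ is a torus (the Napier--Ramachandran theorem guarantees only \emph{positive} genus, so a hyperbolic orbifold with underlying torus and cone points is a live possibility, e.g.\ coming from an elliptic pencil with multiple fibers over an elliptic base), then every $S_i$ is again a torus and $b_1(Q_i)=2$ for all $i$, since $H_1$ of an orbifold group is $\Z^{2\cdot\mathrm{genus}}\oplus(\mathrm{torsion})$. The rank of $Q_i$ does grow linearly, but the growth is carried by the torsion of $H_1(Q_i;\Z)$ (the $ik$ cone generators), or by a virtual free quotient as in the paper --- not by the first Betti number, so your chain of inequalities breaks exactly where the orbifold structure matters.

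Your parenthetical repair --- ``passing to an honest surface cover when $C$ carries cone points'' --- is the right idea, but it is not carried out, and it conceals a second point you never address: rank is not monotone under passing to finite-index \emph{over}groups, so linear growth of $\rk(Q'\cap Q_i)$ for a finite-index surface subgroup $Q'\leq_f Q$ does not by itself bound $\rk(Q_i)$ from below. One needs the Schreier index inequality $\rk(Q'\cap Q_i)-1\leq [Q:Q']\bigl(\rk(Q_i)-1\bigr)$, i.e.\ Lackenby's Lemma 3.1, which is exactly how the paper closes the loop: Scott's LERF theorem for Fuchsian groups plus the paper's Lemma \ref{lemma:virt} produce a finite-index $\widetilde{\Gamma}\leq_f\Gamma$ whose restricted class factors through a nonabelian free group $F_n$, Reidemeister--Schreier gives the linear lower bound there, and Lackenby's lemma transfers positivity of the rank gradient back to $\Gamma$. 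With those two ingredients added, your argument becomes the paper's proof. Two smaller inaccuracies: your aside ``(equivalently, with neither $\phi$ nor $-\phi$ lying in $\Sigma(G)$)'' misstates the BNS criterion --- $\ker\phi$ is finitely generated iff \emph{both} $[\phi]$ and $[-\phi]$ lie in $\Sigma(G)$, so infinite generation means at least one of them fails, not both; and surface-group separability is not an ingredient of the Napier--Ramachandran factorization itself --- in the paper it enters only afterwards, in the rank estimate just described.
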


Note that for K\"ahler groups there is no analog (to the best of our understanding) to Stallings' Fibration Theorem, for a class with $\rg(G,\phi) = 0$; to start, there is no reason to expect that the kernel $\ker~\phi$ is {\em finitely presented}, as K\"ahler groups, in general, are not coherent. Instead, in this case it is the classes $\phi \in \hom(G,\Z)$ with $\rg(G,\phi) > 0$ that admit a geometric interpretation; in fact, as we discuss in Section  \ref{sec:rg} (equation \ref{eq:pencil} and following remarks) these are the classes that (virtually)
factorize through the epimorphism induced, in homotopy, by a genus $g \geq 2$ pencil on a K\"ahler manifold $X$ such that $G = \pi_1(X)$. In fact,  as we will see in the proof of Theorem \ref{thm:mainproof}, the existence of that pencil (at homotopy level) can be thought of as the very reason why for these classes we have $\rg(G,\phi) > 0$.

As a corollary of this theorem, we will give a quick proof of a result of \cite{NR08}, Theorem 0.3(ii):

\begin{cor} \label{cor:ascending} Let $G$ be a K\"ahler group; then $G$ cannot be a properly ascending or descending HNN extension of a finitely generated subgroup. \end{cor}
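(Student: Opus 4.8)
The plan is to derive Corollary~\ref{cor:ascending} directly from Theorem~\ref{thm:mainproof} by translating the HNN-extension hypothesis into a statement about the rank gradient of a suitable class $\phi$. Recall that a group $G$ is an ascending HNN extension with finitely generated base $B$ precisely when there is an epimorphism $\phi \co G \onto \Z$ whose kernel $\ker \phi$ is an increasing union $\bigcup_{n\geq 0} t^{n} B t^{-n}$ of conjugates of a single finitely generated subgroup $B \leq \ker \phi$; the extension is \emph{proper} exactly when this ascending chain is not eventually stationary, equivalently when $B \neq \ker\phi$, i.e.\ when $\ker\phi$ is \emph{not} finitely generated. (The descending case is symmetric, replacing $t$ by $t^{-1}$.) So the first step is to record this standard reformulation: if $G$ is a properly ascending or descending HNN extension of a finitely generated subgroup, then the associated $\phi \in \hom(G,\Z) = H^{1}(G;\Z)$ is a primitive class with $\ker\phi$ \emph{not} finitely generated.

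Granting that dictionary, the proof is short. Suppose for contradiction that the K\"ahler group $G$ is a proper ascending (or descending) HNN extension of a finitely generated subgroup, and let $\phi$ be the primitive class just described, so that $(G,\phi)$ is a K\"ahler pair with $\ker\phi$ infinitely generated. By Theorem~\ref{thm:mainproof}, the failure of $\ker\phi$ to be finitely generated forces $\rg(G,\phi) > 0$. The plan is then to contradict this positivity by exhibiting, for the ascending HNN structure, a cofinal family of finite-index subgroups in the lattice $\{G_i = \ker(G \xrightarrow{\phi} \Z \to \Z_i)\}$ whose ranks grow sublinearly in the index. Concretely, the Mayer--Vietoris / Bass--Serre picture of an ascending HNN extension shows that $G_i$ is generated by the stable letter's $i$-th power together with a \emph{fixed} finite generating set of the base $B$ (the other conjugates $t^{j}Bt^{-j}$ for $0 \le j < i$ lie in $B$ up to the relations, by ascendingness), so $\rk(G_i)$ is bounded \emph{independently} of $i$; dividing by $[G:G_i] = i$ and taking $\liminf_{i\to\infty}$ yields $\rg(G,\phi) = 0$, the desired contradiction.

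The main obstacle I anticipate is making the bound on $\rk(G_i)$ precise and honest. The naive claim "$G_i$ is generated by $t^{i}$ and a generating set of $B$" requires care: $G_i$ is the preimage in $G$ of $i\Z \subset \Z$, and its structure is itself that of an ascending HNN extension of $B$ with stable letter $t^{i}$, so one genuinely needs the ascending property (that conjugation by $t$ maps $B$ \emph{into} $B$) to see that the intermediate conjugates contribute no new generators. I would verify this using the Bass--Serre tree of the HNN extension, or equivalently by writing $\ker\phi = \bigcup_n t^n B t^{-n}$ and checking that $G_i = \langle B, t^{i}\rangle$ directly from the normal form. Once $\rk(G_i) \leq \rk(B) + 1$ for all $i$ is established, the rank-gradient computation is immediate. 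For the descending case one replaces the role of $t$ by $t^{-1}$ and argues identically; alternatively one notes that $G$ being a proper \emph{descending} HNN extension for $\phi$ is the same as being a proper \emph{ascending} extension for $-\phi$, and $(G,-\phi)$ is again a K\"ahler pair with $\rg(G,-\phi) = \rg(G,\phi)$, so the descending case reduces to the ascending one without extra work.
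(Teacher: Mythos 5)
Your proposal is correct and is essentially the paper's own argument: both hinge on the observation that for the class $\phi$ dual to the stable letter, each $G_i=\ker(G\xrightarrow{\phi}\Z\to\Z_i)$ is generated by $t^i$ together with a fixed finite generating set of the base $B$ (so $\rg(G,\phi)=0$), combined with Theorem~\ref{thm:mainproof} and the Bieri--Strebel fact that $\ker\phi$ is finitely generated only when the extension is simultaneously ascending and descending. The only difference is cosmetic -- you run the argument as a contradiction (proper $\Rightarrow$ $\ker\phi$ infinitely generated $\Rightarrow$ $\rg>0$, against the direct computation $\rg=0$), whereas the paper states the contrapositive -- and your verification that $G_i=\langle B,t^i\rangle$ (via $t^nbt^{-n}=t^{ki}\varphi^{ki-n}(b)t^{-ki}$ for $ki\geq n$, rather than the slightly misstated claim that $t^jBt^{-j}\subseteq B$) is exactly the Schreier-rewriting fact the paper invokes.
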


The latter result was originally proven by Napier and Ramachandran in order to show that Thompson's group $F$ (and some of its generalizations) are not K\"ahler, answering a question originally posed by Ross Geoghegan.

Several classes of generalizations of the group $F$ have been studied. We will be interested here in a rather broad collection of finitely presented subgroups, usually denoted as $F(\ell,\Z[\frac{1}{n_1\cdots n_k}],\langle n_1,\dots,n_k\rangle)$, of the group of orientation--preserving PL homeomorphisms of the interval $[0,\ell]$ that was first investigated by Brown in \cite{Bro87a} (and in unpublished work), and further discussed in \cite{St92}; see Section \ref{stein} for the definition. This collection of groups (which for notational simplicity will be referred to here as generalized Thompson groups) include Thompson's group $F = F(1,\Z[\frac{1}{2}],\langle2\rangle)$. However, of relevance to  us, for ``most" of them Corollary \ref{cor:ascending} yields no information to decide if they are K\"ahler. Using Delzant's work \cite{De10} on Bieri--Neumann--Strebel invariants (\cite{BNS87}) of K\"ahler groups, we prove the following:

\begin{ther} \label{thm:st} Let $G = F(\ell,\Z[\frac{1}{n_1\cdots n_k}],\langle n_1,\dots,n_k\rangle)$ be a generalized Thompson group;  then $G$ is not K\"ahler. \end{ther}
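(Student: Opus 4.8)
The plan is to run the argument through the Bieri--Neumann--Strebel invariant $\Sigma^1(G) \subseteq S(G) := (H^1(G;\R)\setminus\{0\})/\R_{>0}$, using Delzant's description of this invariant for K\"ahler groups. Recall that Delzant \cite{De10} proves that if $G$ is K\"ahler, then the complement $\Sigma^1(G)^c$ inside the character sphere $S(G)$ is a finite union of rationally--defined subspheres $S(L_i)$, where each $L_i \subseteq H^1(G;\R)$ is a nonzero subspace pulled back from the first cohomology of a hyperbolic (orbifold) surface group via one of the finitely many pencils carried by a K\"ahler manifold $X$ with $\pi_1(X) = G$. The single feature I would extract is a symmetry statement: since each $S(L_i)$ is invariant under the antipodal involution $[\phi]\mapsto[-\phi]$, the set $\Sigma^1(G)^c$ --- and hence $\Sigma^1(G)$ itself --- is antipodally symmetric for every K\"ahler group. (More crudely: a nonempty $\Sigma^1(G)^c$ of a K\"ahler group can never consist of isolated points, since the smallest admissible piece $S(L_i)$ is already an antipodal pair.) The strategy is then to exhibit that the BNS invariant of a generalized Thompson group violates this symmetry.

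First I would recall the computation of $\Sigma^1(G)$ for $G = F(\ell,\Z[\frac{1}{n_1\cdots n_k}],\langle n_1,\dots,n_k\rangle)$, available from the structure theory of these groups (see \cite{BNS87} for $F$ itself, and \cite{Bro87a,St92} for the general setup discussed in Section \ref{stein}). The relevant homomorphisms are the two endpoint slope characters $\lambda_0, \lambda_\ell \co G \to \R$, where $\lambda_0(g) = \log g'(0^+)$ and $\lambda_\ell(g) = \log g'(\ell^-)$ record the logarithms of the slopes of $g$ at the two ends of the interval; these take values in $\log\langle n_1,\dots,n_k\rangle \subseteq \R$, and are each single real characters (with dense image once $k\ge 2$). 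The key input is that
\[ \Sigma^1(G)^c = \{\,[\lambda_0],\,[\lambda_\ell]\,\}, \]
together with the fact that $\lambda_0$ and $\lambda_\ell$ are linearly independent in $H^1(G;\R)$, since one sees only the germ of $g$ at $0$ and the other only the germ at $\ell$ (elements realizing a prescribed slope at one end while fixing a neighborhood of the other end are easy to build). In particular $[\lambda_0]\neq[-\lambda_\ell]$ and $[\lambda_\ell]\neq[-\lambda_0]$, so these two points do not form an antipodal pair.

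From here the contradiction is immediate: $[\lambda_0]\in\Sigma^1(G)^c$, while by linear independence $[-\lambda_0]\notin\{[\lambda_0],[\lambda_\ell]\}=\Sigma^1(G)^c$, i.e.\ $[-\lambda_0]\in\Sigma^1(G)$. Thus $\Sigma^1(G)$ fails to be antipodally symmetric, which is impossible for a K\"ahler group by Delzant's theorem; hence $G$ is not K\"ahler. I would close by noting why the coarser Corollary \ref{cor:ascending} does not already settle this: when the slope group $\langle n_1,\dots,n_k\rangle$ has rank $\geq 2$ the characters $\lambda_0,\lambda_\ell$ are not integral, so the asymmetry of $\Sigma^1(G)$ does not present itself as a discrete character with finitely generated kernel on one side --- that is, as a proper ascending or descending HNN splitting --- which is exactly the situation the introduction describes as yielding ``no information.'' The main obstacle in this plan is the second step: one must correctly cite (or reprove) the computation of $\Sigma^1(G)$ in the stated generality and verify both that the complement reduces to the two endpoint characters and that these are in general position. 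Once that is in hand, the clash with Delzant's symmetry is purely formal.
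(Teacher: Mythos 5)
Your proposal is correct, but it takes a genuinely different route from the paper's. The paper argues via the rank gradient: by Brin--Squier \cite{BrSq85} the group $G$ contains no $F_2$ subgroup, so by Lemma \ref{lem:free} the rank gradient function $\rg(G,\phi)$ vanishes identically; the Appendix verifies the hypotheses of \cite[Theorem~8.1]{BNS87} (irreducibility and independence of the endpoint homomorphisms), so $G$ has exceptional characters and hence, by \cite[Theorem~B1]{BNS87}, infinitely generated commutator subgroup; and Proposition \ref{prop:fg} --- whose proof is where Delzant's theorem enters, an exceptional character producing a pencil over a Fuchsian group and hence, via Lemma \ref{lemma:fib} and the computation inside Theorem \ref{thm:mainproof}, a class of positive rank gradient --- shows these two conclusions cannot coexist in a K\"ahler group. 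You instead extract from Delzant the antipodal symmetry of the complement of the BNS invariant $\Sigma(G)\subset S(G)$ and contrast it with the full computation $\Sigma(G)^c=\{[\log\lambda],[\log\rho]\}$ of \cite[Theorem~8.1]{BNS87}. This needs more from that theorem than the paper does (the exact computation of the exceptional set, not merely its nonemptiness), and it needs Delzant's description as an equality of sets --- the converse inclusion resting on the standard facts that hyperbolic orbifold surface groups have empty BNS invariant and that exceptionality pulls back along epimorphisms with finitely generated kernel --- but in exchange it dispenses with Brin--Squier and with all of the rank--gradient machinery. Note also that your ``general position'' claim is exactly the independence condition verified in the Appendix: if $\log\rho$ were a negative multiple of $\log\lambda$, the two characters would have the same kernel, whence $\lambda(\Ker\rho)=\lambda(\Ker\lambda)=1\neq\lambda(G)$, contradicting independence. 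So both routes rest on the same two pillars (Delzant's theorem and BNS Theorem 8.1 via the Appendix), but yours makes the obstruction more transparent --- BNS complements of K\"ahler groups are antipodally symmetric, those of generalized Thompson groups are not --- while the paper's detour produces Proposition \ref{prop:fg} and Lemma \ref{lem:free}, statements of independent interest that tie the result to the rank--gradient dichotomy which is the theme of the paper.
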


The structure of the paper is the following. Section \ref{sec:rg} is devoted to the proof of Theorem \ref{thm:mainproof} and  Corollary \ref{cor:ascending}. 
In Section \ref{stein} we prove that the generalized Thompson groups are not K\"ahler (Theorem \ref{thm:st}) and we also discuss the fact that for ``most" of these groups Corollary \ref{cor:ascending} (even applied to finite index subgroups) is inconclusive. Finally in Section \ref{sec:rgbns} we discuss some examples to illustrate the intricacy of the relation between the rank gradient and the BNS invariant of groups in general. The Appendix is devoted to the proof of some basic results on generalized Thompson groups used in Section \ref{stein}.

\subsection*{Acknowledgments.} The authors would like to thank the referee for carefully reading the manuscript and providing us with useful comments. The first author was supported by the SFB 1085 `higher invariants' funded by the Deutsche Forschungsgemeinschaft DFG.

\section{Rank Gradient of K\"ahler Groups} \label{sec:rg}

We start with two lemmata. The first is straightforward.

\begin{lemma} \label{lemma:fib} Consider the short exact sequence of finitely generated groups \[ 1 \to K \to G \xrightarrow{p} \Gamma \to 1. \] Let $\rho \in H^{1}(\Gamma;\Z)$ be a primitive class; then $p^{*} \rho \in H^1(G;\Z)$ is primitive and $\rg(G,p^{*} \rho) = \rg(\Gamma,\rho)$.   \end{lemma}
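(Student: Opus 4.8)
The plan is to reduce everything to a single structural observation: the lattice defining $\rg(G,p^*\rho)$ is the $p$-preimage of the lattice defining $\rg(\Gamma,\rho)$, after which the finite generation of $K$ does all the remaining work. First, primitivity of $p^*\rho$ is immediate: since $\rho\colon\Gamma\to\Z$ and $p\colon G\to\Gamma$ are both surjective, the composite $p^*\rho=\rho\circ p\colon G\to\Z$ is surjective, hence primitive.

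Next I would identify the two lattices termwise. Writing $\Gamma_i=\ker(\Gamma\xrightarrow{\rho}\Z\to\Z_i)$ and $G_i=\ker(G\xrightarrow{p^*\rho}\Z\to\Z_i)$, the factorization $p^*\rho=\rho\circ p$ gives $G_i=p^{-1}(\Gamma_i)$. In particular $K=\ker p\subseteq G_i$ for every $i$, and restricting $p$ yields a short exact sequence
\[ 1\to K\to G_i\xrightarrow{p}\Gamma_i\to 1. \]
Moreover $p$ induces an isomorphism $G/G_i\cong\Gamma/\Gamma_i$ (both are canonically $\Z/i\Z$ via the respective classes), so $[G:G_i]=[\Gamma:\Gamma_i]=i$; the denominators in the two rank gradients agree for each $i$.

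The heart of the matter is then the comparison of $\rk(G_i)$ with $\rk(\Gamma_i)$ via the displayed sequence. On one hand $\Gamma_i$ is a quotient of $G_i$, so $\rk(\Gamma_i)\le\rk(G_i)$. On the other hand, lifting a minimal generating set of $\Gamma_i$ to $G_i$ and adjoining a generating set of $K$ produces a generating set of $G_i$, whence $\rk(G_i)\le\rk(\Gamma_i)+\rk(K)$. Here the hypothesis that the sequence consists of finitely generated groups is exactly what is needed: it makes $\rk(K)$ a finite constant independent of $i$. Dividing the chain $\rk(\Gamma_i)\le\rk(G_i)\le\rk(\Gamma_i)+\rk(K)$ by $i$ and passing to the $\liminf$, the null term $\rk(K)/i\to 0$, so $\liminf_i \rk(G_i)/i=\liminf_i \rk(\Gamma_i)/i$, that is $\rg(G,p^*\rho)=\rg(\Gamma,\rho)$.

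There is no genuine obstacle here; the single point requiring a little care is the $\liminf$ manipulation, where one uses that adding a sequence tending to $0$ leaves the $\liminf$ unchanged. This is precisely why the lemma is \emph{straightforward}, and why finite generation of $K$ is the only hypothesis actually consumed in the argument.
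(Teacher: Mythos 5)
Your proposal is correct and follows essentially the same route as the paper's proof: identify the two lattices via $G_i = p^{-1}(\Gamma_i)$, obtain the short exact sequences $1 \to K \to G_i \to \Gamma_i \to 1$, deduce $\rk(\Gamma_i) \le \rk(G_i) \le \rk(\Gamma_i) + \rk(K)$, and conclude from the definition of the rank gradient since $\rk(K)$ is a finite constant. Your write-up merely makes explicit a few points the paper leaves implicit (the equality of indices $[G:G_i]=[\Gamma:\Gamma_i]=i$ and the $\liminf$ manipulation), which is fine.
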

\begin{proof} The fact that $p^{*} \rho = \rho \circ p$ is primitive is obvious from the fact that $p$ is surjective, hence so is  $p^{*} \rho$ thought of as an element of $Hom(G,\Z)$. The rank gradients in question are associated respectively to the lattices  \[ \{G_i=\ker(G \xrightarrow{p^{*} \rho}\Z\to \Z_i)\} \ \ \ \text{and} \ \ \ \{\Gamma_i=\ker(\Gamma \xrightarrow{\rho}\Z\to \Z_i)\}.\] These groups are related, for each $i \geq 1$, by a short exact sequence   \[ 1 \to K \to G_i \xrightarrow{p|_{G_{i}}} \Gamma_{i} \to 1 \] hence the inequalities \[ \rk(\Gamma_{i}) \leq \rk(G_{i}) \leq \rk(\Gamma_{i}) + \rk(K). \] As the second summand in the rightmost term is independent of $i$, a check of the definition in (\ref{eq:rg}) yields the desired equality. \end{proof}

The proof of the second lemma is more elaborate. Remember that a finitely presented group $\G$ is LERF if, given any finitely generated subgroup $A \leq \G$, and any element $\gamma \in \G \setminus A$, there exists an epimorphism $\a\colon \G \to Q$ to a finite group such that $\a(\gamma) \notin \a(A)$.

\begin{lemma} \label{lemma:virt} Let $\G$ be a finitely presented LERF group, and let $\rho \in \hom(\G,\Z) = H^1(\G;\Z)$ be a primitive class. Then the following are equivalent:
\bn
\item $\ker~ \rho \leq \G$ is infinitely generated;
\item there exists a normal finite index subgroup $\widetilde{\Gamma} \leq_{f} \Gamma$ and an epimorphism $\widetilde{q}\colon \widetilde{\Gamma} \to F_n$ to a free nonabelian group such that
$\rho|_{\widetilde{\Gamma}}\colon \widetilde{\Gamma} \to \Z$ factorizes through
$\widetilde{q}$, namely we have
 \[ \xymatrix{1 \ar[r] & \widetilde{\Gamma} \ar[r] \ar[dr]_{\widetilde{q}} & \Gamma \ar[r] \ar[dr]^{\rho} & Q \ar[r] & 1 \\ & & F_n \ar[r]_{\psi} & \Z. & } \]  with $Q$ a finite group. 
\en
\end{lemma}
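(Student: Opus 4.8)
The equivalence I want to establish relates an algebraic finiteness property of $\ker\rho$ to the existence of a virtual factorization of $\rho$ through a free group. Let me think about each direction.

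(2) $\Rightarrow$ (1): This should be the easy direction. Suppose such a $\widetilde{\Gamma}$, $\widetilde{q}\colon\widetilde{\Gamma}\to F_n$ and $\psi\colon F_n\to\Z$ exist with $\rho|_{\widetilde{\Gamma}}=\psi\circ\widetilde{q}$. Since $\ker\rho$ is finitely generated iff $\ker(\rho|_{\widetilde{\Gamma}})$ is finitely generated (a finite-index subgroup of $\ker\rho$, and finite generation passes up and down finite-index inclusions), it suffices to show $\ker(\rho|_{\widetilde{\Gamma}})$ is infinitely generated. But $\ker(\rho|_{\widetilde{\Gamma}})=\widetilde{q}^{-1}(\ker\psi)$ surjects onto $\ker\psi\leq F_n$, and $\ker\psi$ is a nontrivial normal subgroup of infinite index in the nonabelian free group $F_n$, hence is infinitely generated (an infinite-index normal subgroup of a nonabelian free group is never finitely generated, by a rank count using the Nielsen–Schreier/Euler-characteristic formula). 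A finitely generated group cannot surject onto an infinitely generated one, so $\ker(\rho|_{\widetilde{\Gamma}})$ is infinitely generated.

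(1) $\Rightarrow$ (2): This is where I expect the real work, and where LERF must enter. The plan is to use infinite generation of $\ker\rho$ to manufacture, inside some finite-index subgroup, a pair of elements whose images in a suitable finite quotient witness a free subgroup onto which we can project. Concretely, I would argue as follows. Since $\ker\rho$ is infinitely generated, I want to produce two elements $g_1,g_2\in\ker\rho$ generating a free subgroup $\langle g_1,g_2\rangle\cong F_2$ that sits in $\ker\rho$ in a ``detectable'' way; the natural mechanism is to pick a nonabelian free subgroup and then use LERF to separate a relevant element from a finitely generated subgroup, yielding a finite quotient $Q$ in which the images remain free and in which $\rho$ descends. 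The finite-index normal subgroup $\widetilde{\Gamma}$ should be $\ker(\Gamma\to Q)$ for this $Q$, and the map $\widetilde{q}$ should be a retraction-type epimorphism onto a free group extracted from the geometry of the cover; then $\rho|_{\widetilde\Gamma}$ factors through $\widetilde q$ because the relevant kernel data is arranged to be compatible with $\rho$.

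The main obstacle, as I see it, is the construction of $\widetilde{q}$ and the verification that $\rho|_{\widetilde{\Gamma}}$ genuinely factors through it; mere existence of a free subgroup of $\ker\rho$ is not enough, since I need an \emph{epimorphism} onto $F_n$ compatible with $\rho$, i.e. a global splitting datum rather than a local one. I anticipate the LERF hypothesis is used precisely to convert the ``local'' free subgroup into a ``global'' quotient: separability lets me find a finite quotient $Q$ of $\Gamma$ in which a chosen finitely generated piece embeds, and the preimage $\widetilde\Gamma$ then admits a graph-of-groups or Bass–Serre decomposition whose associated free quotient receives $\rho|_{\widetilde\Gamma}$. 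I would therefore structure the hard direction around (i) extracting a nonabelian free subgroup from the infinitely generated kernel, (ii) applying LERF to get a finite quotient that keeps this structure visible while respecting $\rho$, and (iii) reading off $\widetilde q$ and $\psi$ from the resulting finite cover, checking the commuting diagram. Getting step (iii)'s factorization to be exactly through a \emph{free} group (rather than merely a group with infinitely generated kernel) is the delicate point I would spend the most care on.
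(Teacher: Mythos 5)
Your direction (2) $\Rightarrow$ (1) is correct and is essentially the paper's own argument: $\ker\,(\rho|_{\widetilde{\Gamma}})$ surjects onto $\ker\psi$, a nontrivial normal subgroup of infinite index in the nonabelian free group $F_n$, hence infinitely generated; and finite generation passes between $\ker\rho$ and its finite-index subgroup $\ker\,(\rho|_{\widetilde{\Gamma}})$. The hard direction (1) $\Rightarrow$ (2), however, has a genuine gap, which you partly flag yourself: you never construct $\widetilde{q}$, and your proposed starting point is the wrong one. You want to extract a nonabelian free subgroup of $\ker\rho$, but nothing in the hypotheses guarantees that such a subgroup exists --- its existence is in effect a \emph{consequence} of the lemma, not an available input. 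Moreover, your mechanism for globalizing it cannot work as described: you ask for a finite quotient ``in which the images remain free,'' but finite groups contain no nonabelian free subgroups, so no finite quotient retains that structure; and a free subgroup sitting inside the kernel carries no splitting data of $\Gamma$ from which an epimorphism $\widetilde{\Gamma}\to F_n$ compatible with $\rho$ could be read off.

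The missing idea is to start from a splitting of $\Gamma$ itself, not from a subgroup of the kernel. By Bieri--Strebel \cite{BS78}, the epimorphism $\rho\colon \Gamma\to\Z$ exhibits $\Gamma$ as an HNN extension $\langle B,t\mid t^{-1}A_{+}t=A_{-}\rangle$ with $B\leq \ker\rho$ finitely generated, $A_{\pm}\leq B$ finitely generated, and $\rho(t)=1$; furthermore $\ker\rho$ is finitely generated if and only if $A_{\pm}=B=\ker\rho$. Thus (1) gives, say, $A_{+}\lneq B$. LERF is then applied to the finitely generated subgroup $A_{+}$: separate some $b\in B\setminus A_{+}$ from $A_{+}$ in a finite quotient $\a\colon \Gamma\to S$, so that $\a(A_{+})\lneq \a(B)$. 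This induces an epimorphism from $\Gamma$ onto the HNN extension $\a(B)_{*\a(A)}$ of the \emph{finite} group $\a(B)$ over proper associated subgroups; $\rho$ descends to this quotient because $B\leq\ker\rho$, and by Bass--Serre theory \cite{Se80} an HNN extension of a finite group over a proper subgroup contains a normal free nonabelian subgroup of finite index. Pulling that subgroup back to $\Gamma$ produces $\widetilde{\Gamma}$, $\widetilde{q}$ and $\psi$, with the diagram commuting by construction. Note where LERF actually enters: it separates the finitely generated associated subgroup $A_{+}$ of the Bieri--Strebel splitting, not any free subgroup.
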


\begin{proof} Let $\G$ be a finitely presented group and $\rho \in H^1(\G;\Z)$ a primitive class. We can write $\G$ as an HNN extension of a finitely generated base group $B \leq \ker ~ \rho$ with finitely generated associated subgroups $A_{\pm} \leq B$, namely we have \[ \G = \langle B,t| t^{-1}A_{+} t = A_{-}\rangle,\]  with $\rho(t) = 1 \in \Z$. Moreover, $\ker ~ \rho$ is finitely generated if and only if $A_{\pm} = B =\ker ~ \rho$ (see \cite{BS78}). 

Assume that $\ker ~ \rho$ is infinitely generated; then one (or both) of $A_{\pm}$ is a proper subgroup of $B$, say $A_{+} \lneq B$. Since $\G$ is LERF, for any element $b \in B \setminus A_{+}$ there exists an epimorphism to a finite group $\a\colon \G \to S$ such that $\a(b) \notin \a(A_{+})$. In particular, $\a(A_{+})$  is strictly contained in $\a(B)$.  We can now define an epimorphism from $\G = B_{*A}$ to the HNN extension $\a(B)_{*\a(A)}$ of the finite base group $\a(B)$ with finite associated (isomorphic) subgroups $\a(A_{\pm}) \lneq \a(B)$. (We use here standard shorthand notation for HNN extensions in terms of base and abstract associated subgroups.) Precisely, we have a map 
\[ q\colon  \langle B,t|t^{-1} A_{+} t = A_{-} \rangle \longrightarrow \langle \a(B),t|t^{-1} \a(A_{+}) t = \a(A_{-}) \rangle. \] 
Clearly the map $\rho\colon \G \to \Z$ descends to $\a(B)_{*\a(A)}$. By \cite[Prop~11~p.~120 and Exercise~3~p.~123]{Se80}
 this group admits a finite index free nonabelian normal subgroup, and we get the commutative diagram \[ \xymatrix{1 \ar[r] & \widetilde{\Gamma} \ar[r] \ar[d]_{\tilde{q}}  & \Gamma \ar[r] \ar[d]^{q} \ar[ddr]^{\rho} & Q \ar[r] \ar[d]^{\cong} & 1 \\ 1 \ar[r] & F_n \ar[r] \ar[drr]_{\psi} & \a(B)_{*\a(A)} \ar[r]|\hole \ar[dr] & Q \ar[r] & 1 \\ & &  & \Z & } \] which yields the virtual factorization of $\rho \in \hom(\Gamma,\Z)$.

To prove the other direction, assume that $\rho$ admits a virtual factorization as in $(2)$. To show that $(1)$ holds,  it is enough to prove that $\ker ~ \rho|_{\widetilde \G}$ is an infinitely generated subgroup of ${\widetilde \G}$, as the latter is finite index in $\G$. 
But the group $\ker ~ \rho|_{\widetilde \G}$ surjects to the kernel of a nontrivial homomorphism $F_{n} \to \Z$. This kernel must be infinitely generated, as finitely generated normal subgroups of a free nonabelian group are finite index (see e.g. \cite[Lemma 3.3]{Ca03}). 
\end{proof}

We are now in position to prove our main theorem, that we restate for sake of convenience.

\begin{theorem} 
Let $(G,\phi)$ be a  K\"ahler pair. Then $\rg(G,\phi) = 0$ if and only if $\ker~\phi \leq G$ is finitely generated.
\end{theorem}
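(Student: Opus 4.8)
The plan is to prove the two implications separately, with the easy direction first and the substantive direction relying on the two lemmata together with the structure theory of K\"ahler groups.

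\medskip

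\noindent\textbf{The easy direction.} First I would show that if $\ker~\phi$ is finitely generated, then $\rg(G,\phi)=0$. When $\ker~\phi$ is finitely generated, $G$ is an ascending-and-descending (i.e.\ genuine) HNN extension with $A_+=A_-=B=\ker~\phi$, so in fact $G\cong \ker~\phi \rtimes \Z$. The finite-index subgroups $G_i = \ker(G\xrightarrow{\phi}\Z\to\Z_i)$ are then themselves of the form $\ker~\phi \rtimes \Z$, generated by a fixed finite generating set of $\ker~\phi$ together with a single element $t^i$. Hence $\rk(G_i)\le \rk(\ker~\phi)+1$ is bounded independently of $i$, while $[G:G_i]=i\to\infty$, so the quotient $\rk(G_i)/[G:G_i]\to 0$ and $\rg(G,\phi)=0$. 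This direction uses nothing about the K\"ahler hypothesis.

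\medskip

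\noindent\textbf{The hard direction.} For the converse I would prove the contrapositive: if $\ker~\phi$ is infinitely generated, then $\rg(G,\phi)>0$. The strategy is to produce a genus $g\ge 2$ pencil (equivalently, an orbifold fibration) on the K\"ahler manifold $X$ with $\pi_1(X)=G$ through which $\phi$ virtually factorizes, and then to use Lemma~\ref{lemma:fib} together with a positive lower bound for the rank gradient of a surface group to conclude. Concretely, I expect to invoke the theory of the Bieri--Neumann--Strebel invariant of K\"ahler groups (after Delzant and Napier--Ramachandran): when $\ker~\phi$ is not finitely generated, $\phi$ lies outside the BNS invariant, and this forces the existence of a surface-group quotient $p\co \widetilde G \onto \pi_1(\Sigma_g)$, defined on a finite-index subgroup $\widetilde G\le_f G$, with $g\ge 2$, such that $\phi|_{\widetilde G}$ factors as $\rho\circ p$ for a primitive $\rho\in H^1(\pi_1(\Sigma_g);\Z)$. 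The LERF property of surface groups (the separability input advertised in the introduction) is what makes Lemma~\ref{lemma:virt} applicable and what guarantees the factorization is through a genuine surface group rather than merely a virtual structure.

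\medskip

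\noindent\textbf{Assembling the pieces.} Given such a factorization $\phi|_{\widetilde G}=\rho\circ p$ with $p$ surjective, Lemma~\ref{lemma:fib} applied to the short exact sequence $1\to\ker p\to\widetilde G\xrightarrow{p}\pi_1(\Sigma_g)\to 1$ gives $\rg(\widetilde G,\phi|_{\widetilde G})=\rg(\pi_1(\Sigma_g),\rho)$. Since rank gradient behaves controllably under passage to finite-index subgroups (the lattice restricts and indices multiply), a positive value of $\rg(\pi_1(\Sigma_g),\rho)$ propagates back to give $\rg(G,\phi)>0$. It therefore remains to show $\rg(\pi_1(\Sigma_g),\rho)>0$ for a genus $g\ge 2$ surface group and any primitive $\rho$; this I would get from an explicit count, computing $\rk$ of the finite cyclic covers $\Sigma_{g,i}\to\Sigma_g$ via the Euler characteristic (a degree-$i$ cover of $\Sigma_g$ has first Betti number growing linearly in $i$, forcing $\rk(\pi_1(\Sigma_{g,i}))\ge b_1\sim i(2g-2)$), so that the ratio stays bounded below by $2g-2>0$.

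\medskip

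\noindent\emph{Main obstacle.} The crux, and the step that genuinely uses the K\"ahler hypothesis, is establishing the surface-group factorization when $\ker~\phi$ is infinitely generated. Everything downstream (Lemma~\ref{lemma:fib}, the Euler-characteristic count, the finite-index bookkeeping) is elementary; the difficulty is entirely in extracting the pencil, which is where the deep results on K\"ahler groups and the separability of surface groups must enter.
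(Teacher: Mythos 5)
Your proposal is correct in outline, and it rests on the same pillars as the paper's proof: the easy direction is handled identically (the cyclic-cover subgroups $G_i=\ker~\phi\rtimes\langle t^i\rangle$ have rank bounded by $\rk(\ker~\phi)+1$), and the hard direction draws on the same K\"ahler input, namely the Napier--Ramachandran factorization theorem \cite[Theorem 4.3]{NR01} (which the paper invokes directly, rather than through the BNS formalism): an epimorphism $\phi\co G\to\Z$ with infinitely generated kernel factors through an epimorphism $p\co G\to\Gamma$ with finitely generated kernel onto a cocompact Fuchsian group $\Gamma$ of positive genus, and Lemma \ref{lemma:fib} transfers the rank gradient to $\Gamma$. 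Where you genuinely diverge is the endgame. The paper stays with the orbisurface group $\Gamma$ and applies Lemma \ref{lemma:virt} --- whose separability hypothesis is supplied by Scott's theorem \cite{Sc78} that Fuchsian groups are LERF --- to factor $\rho$ virtually through a free nonabelian group $F_n$, and then bounds ranks of finite-index subgroups of $F_n$ by Reidemeister--Schreier. You instead pass to a torsion-free finite-index subgroup of $\Gamma$, which is a genuine closed surface group of genus $g\geq 2$, and bound ranks of its cyclic covers by Euler characteristic. Both endgames work, the positivity propagating back to $G$ by Lackenby's finite-index lemma in either case; yours is arguably more economical, since it bypasses the HNN/LERF machinery of Lemma \ref{lemma:virt} entirely, while the paper's version has the merit of running parallel to the $3$--manifold argument of \cite{DFV14} and of isolating a statement (Lemma \ref{lemma:virt}) valid for all LERF groups.

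Two inaccuracies in your write-up should be repaired. First, the tool that upgrades the Fuchsian quotient to a surface-group quotient on a finite-index subgroup $\widetilde{G}\leq_f G$ is \emph{not} LERF of surface groups but the existence of a torsion-free finite-index subgroup in a cocompact Fuchsian group (Selberg/Fenchel; residual finiteness together with the finitely many conjugacy classes of torsion elements suffices). Your appeal to Lemma \ref{lemma:virt} at this point is a red herring: the argument you actually assemble never uses that lemma, and LERF is precisely the ingredient your route avoids. Second, $\phi|_{\widetilde{G}}$ need not be primitive: its image is a subgroup $m\Z\leq\Z$, so the class $\rho$ on the surface group that it factors through is primitive only after dividing by $m$, and this divisibility has to be carried through the rank count and the identification of the lattice $\{\widetilde{G}\cap G_i\}$ --- exactly the bookkeeping the paper performs with the factor $\op{lcm}(m,i)/m$ in its final inequalities. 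Neither issue is fatal; both are fixable with standard arguments.
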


\begin{proof} The proof of the ``if" direction is straightforward: if $\phi\colon G \to \Z$ is an epimorphism with finitely generated kernel $\ker~\phi$, and we denote by $\{g_{1},\dots,g_{k}\}$ a generating set for $\ker~\phi$, then the  group $G$ is the semidirect product $\ker~ \phi \rtimes \langle t \rangle$ and admits the generating set $\{g_{1},\dots,g_{k},t\}$, so that the rank of $G$ is bounded above by $k + 1$. It is well--known that \[ G_{i} :=\ker(G\xrightarrow{\phi}\Z\to \Z/i)\] equals $\ker~ \phi \rtimes \langle t^{i} \rangle$, hence $\mbox{rk}(G_i)$ is uniformly bounded above by $k+1$. This immediately entails the vanishing of $\rg(G,\phi)$.

For the ``only if" direction, we proceed as follows. Let  $\phi\colon G \to \Z$ be an epimorphism with infinitely generated kernel. By \cite[Theorem 4.3]{NR01}, such an epimorphism factorizes through an epimorphism $p\colon G \to \Gamma$ with finitely generated kernel $K$ where $\Gamma$ is a cocompact Fuchsian group of positive first Betti number, which arises as the fundamental group of a hyperbolic Riemann orbisurface $\Sigma^{orb}$ of positive genus:
\begin{equation} \label{eq:pencil}  \xymatrix{1 \ar[r] & K \ar[r] & G \ar[r]^{p} \ar[drr]_{\phi} & \Gamma \ar[r] \ar[dr]^{\rho} & 1\\ & & &  & \Z }\end{equation}
Although not strictly necessary for the proof, we can integrate this picture and interpret (following \cite{Ca03,NR06}) the maps of the diagram in terms of induced maps of  an irrational pencil $f\colon  X \to \Sigma$, where $X$ is a K\"ahler manifold with $G = \pi_{1}(X)$ and  $\Sigma$ a Riemann surface of positive genus. In the presence of multiple fibers, to obtain a surjection of fundamental groups with finitely generated kernel, we promote the fundamental group of $\Sigma$ to the fundamental group $\Gamma$ of the orbisurface $\Sigma^{orb}$ with orbifold points determined by the multiple fibers.

The proof now boils down to the use of the lemmata above and some basic results from \cite{Lac05 }. By Lemma \ref{lemma:fib},  $\rg(G,\phi) = \rg(G,p^{*} \rho) = \rg(\Gamma,\rho)$. By definition, the latter is the rank gradient $\rg(\Gamma,\{\Gamma_{i}\})$ of the lattice $\{\Gamma_i=\ker(\Gamma \xrightarrow{\rho}\Z\to \Z_i)\}$.  Rank gradients behave well under passing to finite index subgroup: let $\widetilde{\Gamma} \leq_{f} \Gamma$ be a finite index subgroup; \cite[Lemma 3.1]{Lac05 } asserts that $\rg(\Gamma,\{\Gamma_{i}\})$ is nonzero if and only if $\rg(\widetilde{\Gamma},\{\widetilde{\Gamma} \cap \Gamma_{i}\})$ is nonzero.
We can now apply Lemma \ref{lemma:virt} to $\Gamma$, as the latter is LERF by \cite{Sc78} and for any $\rho \in H^1(\G;\Z)$, $\ker ~ \rho$ is infinitely generated (see e.g. \cite[Lemma 3.4]{Ca03}). Keeping the notation from Lemma \ref{lemma:virt} we have
 \[ \xymatrix{ & \widetilde{\Gamma} \ar[r] \ar[dr]_{\widetilde{q}} & \Gamma \ar[dr]^{\rho} &  & \\ & & F_n \ar[r]_{\psi} & \Z & } \]
so  $\widetilde{q}\colon \widetilde{\Gamma} \to F_n$ restricts to an epimorphism from $\widetilde{\Gamma} \cap \Gamma_{i}$ to $\ker(F_n \xrightarrow{\psi}\Z\to \Z_i)\}$. 
It follows that we have the inequality \begin{equation}  \label{eq:rkin1} \rk(\widetilde{\Gamma} \cap \Gamma_{i}) \geq \rk (\ker(F_n \xrightarrow{\psi}\Z\to \Z_i)).\end{equation} We endeavor now to compute the term on the right hand side. In general, the map $\psi\colon F_n \rightarrow \Z$  identified in Lemma \ref{lemma:virt} may fail to be surjective, when $m$ is not primitive and has some divisibility $m \in \Z$. In that case the image of $\psi$ is a subgroup $m\Z \subset \Z$. A simple exercise shows that the index of $\ker(F_n \xrightarrow{\psi}\Z\to \Z_i)$ in $F_n$ equals $\frac{\op{lcm}(m,i)}{m}$. The Reidemeister--Schreier formula gives then the rank of this free group:
\begin{equation}  \label{eq:rkin2}  \rk (\ker(F_n \xrightarrow{\psi}\Z\to \Z_i)) = \frac{{\op{lcm}(m,i)}}{m}(n-1) + 1. \end{equation} Combining equations (\ref{eq:rkin1}) and (\ref{eq:rkin2}) in the calculation of the rank gradient we get
\[ \frac{\rk(\widetilde{\Gamma} \cap \Gamma_{i})}{[{\widetilde \Gamma}: \widetilde{\Gamma} \cap \Gamma_{i}]} \geq  \frac{\rk(\widetilde{\Gamma} \cap \Gamma_{i})}{[\Gamma: \Gamma_{i}]} \geq  \frac{1}{i}\Big( \frac{\op{lcm}(m,i)}{m}(n-1) + 1 \Big) \geq \frac{1}{m}(n-1) + \frac{1}{i}  \] from which it follows that \[ \rg(\widetilde{\Gamma},\{\widetilde{\Gamma} \cap \Gamma_{i}\}) \geq \frac{1}{m}(n-1) > 0.\] As observed above, this entails $\rg(G,\phi) > 0$. 
\end{proof}

\begin{remark} While there is no reason to expect that K\"ahler groups are LERF (or even satisfy weaker forms of separability, that would be still sufficient for Lemma \ref{lemma:virt} to hold) we see that the presence of an irrational pencil, in the form of $(\ref{eq:pencil})$ yields essentially the same consequences, using the fact that cocompact Fuchsian groups are LERF. In that sense, the proof above and the proof for $3$--manifold groups in \cite{DFV14} originate from similar ingredients.
\end{remark}

We can now give a proof of Corollary \ref{cor:ascending}. Recall that a finitely presented $G$ is an \textit{ascending} (respectively \textit{descending}) HNN extension if it can be written as \[ G = \langle B,t| t^{-1}A_{+} t = A_{-}\rangle \] where $B$ is  finitely generated and  where $A_{+} = B$ (respectively $A_{-} = B$).

\begin{proof} Let $G$ be a K\"ahler group that can be written as an ascending or descending  HNN extension with stable letter $t$, and denote by $\phi\colon G \to \Z$ the homomorphism sending $t$ to the generator of $\Z$ and the base of the extension (a subgroup $B \leq G$ with generating set $\{g_{1},\dots,g_{k}\}$) to $0$. We claim that $\rg(G,\phi)$ vanishes. The proof is almost \textit{verbatim} the proof of the ``if" direction of Theorem \ref{thm:mainproof}, once we observe (using Schreier's rewriting process) the well--known fact that $G_{i} :=\ker(G\xrightarrow{\phi}\Z\to \Z/i)$ is generated by $\{g_{1},\dots,g_{k},t^{i}\}$, i.e. the rank of $G_i$ is uniformly bounded.  By Theorem \ref{thm:mainproof} it follows that  $\ker ~ \phi$ is finitely generated. But as already mentioned in the proof of Lemma \ref{lemma:virt}  this can occur  if and only if the HNN extension is {\em simultaneously} ascending and descending, i.e. both $A_{\pm} = B$.
\end{proof}

\section{The Groups $F(\ell,\Z[\frac{1}{n_1\cdots n_k}],\langle n_1,\dots,n_k\rangle)$  are not K\"ahler} \label{stein}

As observed in \cite{NR06,NR08} Corollary \ref{cor:ascending}  implies that Thompson's group $F$ (and some of its generalizations)  is not K\"ahler, as that group is a properly ascending HNN extension. There are several families of generalizations of Thompson's group, that arise as finitely presented subgroups of the group of orientation--preserving PL homeomorphisms of an interval, for which we can expect as well that they fail to be K\"ahler. For ``most" of them, however, Corollary \ref{cor:ascending} is not conclusive: there are examples for which every class $\phi$ in the first integral cohomology group has the property that $\ker~ \phi$ is finitely generated (see below). 

We will be able to decide the problem above by determining, in more generality, some constraints that groups with vanishing rank gradient function  $\rg(G,\phi)$ need to satisfy in order to be K\"ahler.

In order to determine these constraints we will make use of the Bieri--Neumann--Strebel invariant of $G$  (for which we refer the reader to \cite{BNS87}), together with the generalization of \cite[Theorem 4.3]{NR01} due to Delzant.

\begin{proposition}\label{prop:fg} Let $G$ be a K\"ahler group whose rank gradient function $\rg(G,\phi)$ vanishes identically for all $\phi \in H^1(G;\Z)$. Then the commutator subgroup $[G,G] \leq G$ of $G$ is finitely generated. \end{proposition}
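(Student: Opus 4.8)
The plan is to translate the hypothesis into the language of the Bieri--Neumann--Strebel invariant $\Sigma(G) \subseteq S(G)$, where $S(G) = (H^1(G;\R)\setminus\{0\})/\R_{>0}$ is the character sphere, and then to invoke Delzant's description \cite{De10} of $\Sigma(G)$ for K\"ahler groups. First I would dispose of the trivial case $b_1(G) = 0$: here $H^1(G;\Z) = 0$, so the hypothesis is vacuous, and since $G$ is finitely presented with finite abelianization, $[G,G]$ is a finite--index subgroup of a finitely generated group and is therefore finitely generated. So from now on assume $b_1(G) \geq 1$, i.e.\ $S(G) \neq \emptyset$.

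The crux is a pair of standard facts from \cite{BNS87}. On the one hand, for a primitive class $\phi \in H^1(G;\Z)$, the kernel $\ker\phi$ is finitely generated if and only if both $[\phi]$ and $[-\phi]$ lie in $\Sigma(G)$. On the other hand, since every real character of $G$ vanishes on $[G,G]$, one has $S(G,[G,G]) = S(G)$, and the BNS criterion says that $[G,G]$ is finitely generated if and only if $\Sigma(G) = S(G)$. Combining the first fact with Theorem \ref{thm:mainproof}, the hypothesis that $\rg(G,\phi) = 0$ for all $\phi$ is equivalent to the statement that every class represented by a primitive integral $\phi$ lies in $\Sigma(G)$. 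Equivalently, the complement $\Sigma^c(G) := S(G)\setminus\Sigma(G)$ contains no rational points.

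At this stage I would bring in Delzant's theorem on the BNS invariants of K\"ahler groups: for a K\"ahler group $G$, the complement $\Sigma^c(G)$ is a finite union of rational subspheres of $S(G)$, namely the subspheres cut out by the rational subspaces of $H^1(G;\R)$ associated to the orbifold pencils carried by a K\"ahler model of $G$. Now any nonempty rational subsphere contains rational points, being of the form $(V\setminus\{0\})/\R_{>0}$ for a subspace $V$ defined over $\Q$, inside which $V\cap H^1(G;\Q)$ is dense. Hence, were $\Sigma^c(G)$ nonempty, it would contain a rational point, contradicting the conclusion of the previous paragraph. Therefore $\Sigma^c(G) = \emptyset$, i.e.\ $\Sigma(G) = S(G)$, and the second BNS fact yields that $[G,G]$ is finitely generated.

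The substantive step, and the place where the K\"ahler hypothesis is indispensable, is this last one. For a general finitely generated group the knowledge that every rational character has finitely generated kernel does \emph{not} force $\Sigma(G) = S(G)$: the set $\Sigma^c(G)$ could be a nonempty closed subset avoiding all rational directions. It is precisely Delzant's rationality statement that excludes this possibility, by guaranteeing that $\Sigma^c(G)$ is a finite union of subspheres defined over $\Q$, each of which, if nonempty, is detected by a primitive integral class. I expect the only care needed elsewhere is in citing the exact form of Delzant's result and in matching its normalization of the character sphere with the one used above.
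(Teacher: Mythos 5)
Your proof is correct, and it runs on the same engine as the paper's: Delzant's theorem \cite{De10} combined with \cite[Theorem~B1]{BNS87}. The packaging, however, differs in a way worth noting. The paper argues by contradiction without invoking Theorem \ref{thm:mainproof} as a statement: if $[G,G]$ were infinitely generated, Theorem B1 produces an exceptional character, Delzant's theorem converts it into an epimorphism $p\colon G \to \Gamma$ onto a cocompact Fuchsian group with finitely generated kernel, and Lemma \ref{lemma:fib} transfers the positivity of $\rg(\Gamma,\rho)$ (established inside the proof of Theorem \ref{thm:mainproof}) to the pulled-back integral classes $p^{*}\rho$, contradicting the hypothesis. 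You instead use Theorem \ref{thm:mainproof} as a black box to exclude rational points from $\Sigma^c(G)$, and then kill all of $\Sigma^c(G)$ using its structure as a finite union of rational subspheres. The one load-bearing point in your route is that it requires the full ``equality'' form of Delzant's theorem ($\Sigma^c(G)$ \emph{equals} the union of the pencil subspheres), not merely the factorization statement that every exceptional character factors through a pencil: with only the latter, a nonempty $\Sigma^c(G)$ need not visibly contain a rational point, and one would close the gap exactly as the paper does, by observing that pullbacks of integral Fuchsian classes have infinitely generated kernel (Lemma \ref{lemma:fib}, or \cite[Lemma~3.4]{Ca03}). Since Delzant does prove the equality form, your argument stands; what it buys is a cleaner dependency structure, making the proposition a formal consequence of Theorem \ref{thm:mainproof}, Delzant's theorem, and BNS theory, at the cost of citing a stronger form of \cite{De10} than the paper needs.
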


\begin{proof} Let $G$ be as in the statement and $X$ a K\"ahler manifold with $G = \pi_1(X)$. Assume, by contradiction, 
 that the commutator subgroup is not finitely generated. By \cite[Theorem B1]{BNS87} there exists an exceptional character $\chi\colon G \to \R$. But then there exists an epimorphism $p\colon G \to \G$ with finitely generated kernel (see \cite{Ca03}) to a cocompact Fuchsian group, induced by the irrational pencil $f\colon X \to \Sigma$ of  \cite[Th\'eor\`eme~1.1]{De10}. The induced map  $p^{*}\colon H^{1}(\G;\Z) \to H^{1}(G\;Z)$ is then injective, and for all $\phi \in \mbox{im}(p^{*}) \subset H^{1}(G;\Z)$ we have $\rg(G,\phi) > 0$ by Lemma  \ref{lemma:fib}, which yields a contradiction to our assumption on $G$. \end{proof} 

There is a case where we can use a simple criterion to certify that a group has rank gradient that vanishes identically  for all $\phi \in H^1(G;\Z)$, and this will be sufficient to apply Proposition \ref{prop:fg} to Thompson's group and its generalization.

\begin{lemma} \label{lem:free} Let $G$ be a finitely presented group  which does not contain any $F_2$ subgroup. Then the rank gradient function  $\rg(G,\phi)$ vanishes identically for all $\phi \in H^1(G;\Z)$. \end{lemma}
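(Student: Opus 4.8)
The plan is to exploit the standard HNN decomposition of a finitely presented group relative to a primitive class, together with the fact that a \emph{proper} HNN extension always contains a nonabelian free subgroup. The hypothesis that $G$ contains no $F_2$ will then force the decomposition to be ascending or descending, which is exactly the situation in which the rank gradient vanishes. It suffices to treat a primitive class $\phi \in H^1(G;\Z)$: a general class is a multiple $d\phi_0$ of a primitive one, and an elementary comparison of the lattices $\{\ker(G\xrightarrow{\phi}\Z\to\Z_i)\}$ and $\{\ker(G\xrightarrow{\phi_0}\Z\to\Z_i)\}$ (the composite to $\Z_i$ only sees $\phi$ modulo $i$) shows $\rg(G,d\phi_0)=0$ whenever $\rg(G,\phi_0)=0$. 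So fix a primitive $\phi$ and, exactly as recalled at the start of the proof of Lemma \ref{lemma:virt}, invoke \cite{BS78} to write $G$ as an HNN extension $G=\langle B,t\mid t^{-1}A_+t=A_-\rangle$ with $B\le\ker\phi$ finitely generated, $A_\pm\le B$ finitely generated, and $\phi(t)=1$; let $\{g_1,\dots,g_k\}$ generate $B$.

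The decomposition is now either ascending or descending (that is, $A_+=B$ or $A_-=B$) or else \emph{proper}, meaning $A_+\subsetneq B$ \emph{and} $A_-\subsetneq B$. I claim the proper case cannot occur under our hypothesis, because a proper HNN extension always contains an $F_2$; this is the heart of the argument. To see it, recall that a group is never the union of two proper subgroups, so one may choose $b\in B\setminus(A_+\cup A_-)$. Consider the action of $G$ on its Bass--Serre tree $T$ (\cite{Se80}): the stable letter $t$ is a hyperbolic isometry whose axis $L$ passes through the base vertex $v$ (with stabiliser $B$), and the two edges of $L$ issuing from $v$ have $B$-stabilisers $A_+$ and $A_-$. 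Since $b$ fixes $v$ but, lying in neither $A_+$ nor $A_-$, moves \emph{both} of these edges, the translate $bL$ meets $L$ only in $v$; hence $t$ and $btb^{-1}$ are hyperbolics whose axes cross in a single point, and a standard ping--pong argument shows that $t^N$ and $bt^Nb^{-1}$ generate a free group of rank two for $N$ large, contradicting the absence of $F_2$. I expect this step to be the main obstacle: the delicate point is that $b$ must avoid \emph{both} associated subgroups. Choosing $b$ outside only one of $A_\pm$ does not suffice — in a descending extension such a $b$ fixes an entire half of $L$, and indeed no free subgroup appears, as the solvable Baumslag--Solitar groups $BS(n,1)$ illustrate.

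It remains to handle the ascending/descending case, where the computation is the one already used in the proof of Corollary \ref{cor:ascending}. A Schreier rewriting shows that $G_i:=\ker(G\xrightarrow{\phi}\Z\to\Z/i)$ is generated by $\{g_1,\dots,g_k,t^i\}$, so that $\rk(G_i)\le k+1$ uniformly in $i$. Feeding this uniform bound into the definition (\ref{eq:rg}) gives $\rg(G,\phi)=\liminf_i \rk(G_i)/i=0$. As $\phi$ was an arbitrary primitive class, the rank gradient function vanishes identically, which completes the proof.
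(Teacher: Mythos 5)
Your proof is correct, and its overall skeleton coincides with the paper's: use \cite{BS78} to write $G$ as an HNN extension $G=\langle B,t\mid t^{-1}A_+t=A_-\rangle$ over a finitely generated base $B\le \ker\phi$ with finitely generated associated subgroups, argue that the absence of $F_2$ forces $A_+=B$ or $A_-=B$, and then get the uniform bound $\rk(G_i)\le k+1$ by Schreier rewriting exactly as in the proof of Corollary \ref{cor:ascending}, whence $\rg(G,\phi)=0$ by (\ref{eq:rg}). The one place you genuinely depart from the paper is the middle step: the paper disposes of it in one sentence by citing \cite{BM07} for the ``well-known'' fact that a non-ascending, non-descending HNN extension contains $F_2$, whereas you prove that fact from scratch via the action on the Bass--Serre tree \cite{Se80}: pick $b\in B\setminus(A_+\cup A_-)$ (possible since a group is never the union of two proper subgroups), observe that the two edges of the axis $L$ of $t$ at the base vertex $v$ have stabilizers $A_+$ and $A_-$, conclude $bL\cap L=\{v\}$, and ping-pong with $t^N$ and $bt^Nb^{-1}$. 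This is essentially the standard proof of the fact the paper cites; the paper's route buys brevity, yours buys self-containedness, and it makes visible why ascending and descending extensions (e.g.\ the solvable groups $BS(n,1)$) are genuine exceptions. One point to tighten: knowing that $b$ moves both edges of $L$ at $v$ does not by itself exclude that $b$ \emph{interchanges} them, which would again force $bL\cap L$ to contain an edge; this is ruled out because $b[v,tv]=[v,t^{-1}v]$ would mean $btv=t^{-1}v$, i.e.\ $tbt\in B$, which is impossible since $\phi(tbt)=2$ while $\phi(B)=0$ (equivalently, the $G$-action preserves the natural orientation of the Bass--Serre tree). Finally, your preliminary reduction to primitive classes is harmless but unnecessary: in this paper $\rg(G,\phi)$ is only defined for primitive $\phi$, as the lattice in (\ref{eq:rg}) is built from a surjection onto $\Z$, so the lemma is a statement about primitive classes only.
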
 

\begin{proof}  Given any primitive class $\phi \in H^{1}(G;\Z)$, by \cite{BS78} there exists a finitely generated subgroup $B \leq \ker ~ \phi \leq  G$ and finitely generated  associated subgroups $A_{\pm} \leq B$ so that $G = \langle B,t| t^{-1}A_{+} t = A_{-} \rangle$, so that $\phi(t) = 1$. It is well--known (see e.g. \cite{BM07} for a detailed proof) that unless this extension is ascending or descending, then $G$ contains an $F_2$ subgroup. If $G$ does not contain any $F_2$ subgroup, repeating the argument of the proof of Corollary \ref{cor:ascending} for all primitive classes $\phi \in H^{1}(G;\Z)$, we deduce the vanishing of $rg(G,\phi)$. \end{proof}

\begin{remark} In light of the results above, we can give a group-theoretic relic of the result of Delzant
(\cite[Th\'eor\`eme~1.1]{De10}): either a K\"ahler group $G$ has finitely generated commutator subgroup, or it can be written as a non--ascending and non--descending HNN extension. This alternative is decided by the vanishing of the rank gradient function.
A straightforward (but not completely trivial) exercise in $3$--manifold group theory shows that this alternative is the same that occurs for a $3$--manifold group $\pi = \pi_1(N)$. (The nontriviality comes from the need to use results on the Thurston norm of  $N$, that has no complete analog in the case of other groups.) For $3$--manifolds with $b_{1}(\pi) \geq 2$, furthermore, the first case corresponds to $N$ a nilmanifold or Euclidean.
For a general group, it is quite possible that the assumption  that the rank gradient function $\rg(G,\phi)$ vanishes identically  does not yield particularly strong consequences (see e.g. the examples discussed in Section \ref{sec:rgbns}).
 \end{remark}

We are now in a position to show that a large class of generalizations of Thompson's group are not K\"ahler.  Omitting further generality, we will limit ourselves to the groups of type $F(\ell,\Z[\frac{1}{n_1\cdots n_k}],\langle n_1,\dots,n_k\rangle)$ discussed by Brown in \cite{Bro87a} and further elucidated by Stein in \cite{St92}. These groups are finitely presented, and can be described as follows: Given a nontrivial multiplicative subgroup \[ \langle n_1,\dots,n_k\rangle = \{n_1^{j_1}\cdots n_k^{j_k}|j_i \in \Z\} \leq \R^{+} \] (where we assume that $\{n_1,\dots,n_{k}\}$ is a positive integer basis  of the multiplicative group), this groups is defined as the group of orientation--preserving PL homeomorphism of the interval $[0,\ell]$, where $0 < \ell \in \Z[\frac{1}{n_1\cdots n_k}]$, with finitely many singularities all in $\Z[\frac{1}{n_1\cdots n_k}]$ and slopes in $\langle n_1,\dots,n_k\rangle$. In this notation, Thompson's group $F$ is $F(1,\Z[\frac12],\langle 2\rangle)$.
The proof that these groups are not K\"ahler is a  consequence of the knowledge of the BNS invariants for the group in question, which follows from the results of  \cite[Section 8]{BNS87}:

\begin{theorem} \label{thm:gnk} Let $G = F(\ell,\Z[\frac{1}{n_1\cdots n_k}],\langle n_1,\dots,n_k\rangle)$; then $G$ is not K\"ahler.  \end{theorem}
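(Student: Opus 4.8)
The plan is to combine Proposition~\ref{prop:fg} with Lemma~\ref{lem:free}, so the entire argument reduces to two facts about the generalized Thompson groups $G = F(\ell,\Z[\frac{1}{n_1\cdots n_k}],\langle n_1,\dots,n_k\rangle)$: that $G$ contains no nonabelian free subgroup $F_2$, and that $[G,G]$ is \emph{not} finitely generated. Granting these, the proof is immediate: by Lemma~\ref{lem:free} the absence of an $F_2$ subgroup forces the rank gradient function $\rg(G,\phi)$ to vanish identically for all $\phi \in H^1(G;\Z)$; if $G$ were K\"ahler, Proposition~\ref{prop:fg} would then force $[G,G]$ to be finitely generated, contradicting the second fact. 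Hence $G$ is not K\"ahler. So the real content is verifying the two structural properties, which are classical facts about these PL homeomorphism groups.

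\medskip

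First I would establish that $G$ contains no $F_2$. The natural route is to observe that $G$ acts on the interval $[0,\ell]$ by orientation-preserving PL homeomorphisms with slopes in the abelian group $\langle n_1,\dots,n_k\rangle$; such groups of PL homeomorphisms of the line satisfy a Tits-type alternative in the strong form due to Brin--Squier (and Bieri--Strebel): a group of orientation-preserving PL homeomorphisms of an interval either contains $F_2$ or is, in a suitable sense, built out of metabelian/solvable pieces. For these Stein--Brown groups the relevant statement is that they contain no nonabelian free subgroup, so I would cite the Brin--Squier result (or the treatment in \cite{St92,Bro87a}) directly. Alternatively, and more in the spirit of the paper, one can appeal to the fact that the BNS invariant $\Sigma^1(G)$ of these groups is explicitly known from \cite[Section~8]{BNS87}; the complement of $\Sigma^1$ together with its symmetry encodes exactly which HNN decompositions are ascending/descending, and the structure is incompatible with harbouring an $F_2$.

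\medskip

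Second I would show $[G,G]$ is infinitely generated. The cleanest criterion is via the BNS invariant: by \cite[Theorem~B1]{BNS87}, $[G,G]$ is finitely generated if and only if $\Sigma^1(G) = S(G)$, the whole character sphere. The computation in \cite[Section~8]{BNS87} shows that for $F(\ell,\Z[\frac{1}{n_1\cdots n_k}],\langle n_1,\dots,n_k\rangle)$ the invariant $\Sigma^1(G)$ is a \emph{proper} open subset of the sphere $S(G) \cong S^{k-1}$ (indeed its complement consists of finitely many points corresponding to the characters detecting the action near the endpoints $0$ and $\ell$), so the criterion fails and $[G,G]$ is not finitely generated. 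I expect the main obstacle to be precisely this BNS computation: one must identify the abelianization $H_1(G;\Z)$, pin down the character sphere, and locate the exceptional characters in the complement of $\Sigma^1(G)$, which requires unwinding the definitions of \cite{BNS87} for the specific PL action. Since the paper's Appendix is advertised as supplying ``basic results on generalized Thompson groups,'' I would relegate the detailed verification of the abelianization and the location of the exceptional characters there, and in the body simply record that these two properties hold and feed them into Proposition~\ref{prop:fg} and Lemma~\ref{lem:free}.
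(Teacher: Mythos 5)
Your proposal is correct and is essentially the paper's own proof: Brin--Squier (\cite[Theorem~3.1]{BrSq85}) excludes $F_2$ subgroups so that Lemma~\ref{lem:free} forces $\rg(G,\phi)$ to vanish identically, while the BNS computation for PL homeomorphism groups (\cite[Theorem~8.1]{BNS87}, whose hypotheses --- irreducibility and independence of the endpoint-slope characters --- are exactly what the paper's Appendix verifies) yields exceptional characters, hence an infinitely generated commutator subgroup, and Proposition~\ref{prop:fg} then gives the contradiction. The only slip is cosmetic and plays no role in the argument: the character sphere is $S^{b_1(G)-1}$, not $S^{k-1}$ (e.g.\ $b_1(G)=4$ for $F(1,\Z[\frac{1}{6}],\langle 2,3\rangle)$ while $k=2$).
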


\begin{proof} By \cite[Theorem~3.1]{BrSq85} the group $G = F(\ell,\Z[\frac{1}{n_1\cdots n_k}],\langle n_1,\dots,n_k\rangle)$ does not contain $F_2$ subgroups, hence its rank gradient function $\rg(G,\phi)$ identically vanishes. On the other hand, as we show in detail in the  Appendix, 
 \cite[Theorem~8.1]{BNS87} implies that it has exceptional characters, hence its commutator subgroup cannot be finitely generated. By Proposition \ref{prop:fg} $G$ is not K\"ahler. \end{proof}

As observed in \cite[Section 8]{BNS87} ``most" of the groups above have the property that for all primitive classes $\phi \in H^1(G;\Z)$, $\ker ~\phi$ is finitely generated: an explicit example is the group $G = F(1,\Z[\frac{1}{6}],\langle 2,3\rangle)$ of homeomorphisms of the unit interval with slopes in $\{2^j  3^k | j,k \in \Z\}$ and singularities in $\Z[1/6]$; here $H^{1}(G;\Z) = \Z^4$, as computed in \cite{St92}. It follows from this observation that for these groups Corollary \ref{cor:ascending} is inconclusive. A less obvious fact is that we cannot even apply the ``covering trick" to use  Corollary \ref{cor:ascending}, namely fish for a finite index subgroup $H \leq_{f} G$ that is a properly ascending or descending HNN  extension, hence fails to be K\"ahler by Corollary \ref{cor:ascending}:

\begin{lemma}  Let $G = F(\ell,\Z[\frac{1}{n_1\cdots n_k}],\langle n_1,\dots,n_k\rangle)$, and assume that $G$ cannot be written as a properly ascending or descending HNN extension; then neither can any of the finite index subgroups of $G$.\end{lemma}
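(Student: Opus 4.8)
The plan is to recast the statement in terms of the Bieri--Neumann--Strebel invariant $\Sigma(G)\subseteq S(G)$, where $S(G)=(\hom(G,\R)\setminus\{0\})/\R^{+}$ is the character sphere. Recall from \cite{BS78,BNS87} that for a finitely presented group and a primitive class $\phi$, the kernel $\ker\phi$ is finitely generated exactly when both $[\phi]$ and $[-\phi]$ lie in $\Sigma(G)$, while $G$ is a \emph{properly} ascending or descending HNN extension along $\phi$ exactly when precisely one of $[\phi],[-\phi]$ lies in $\Sigma(G)$. Hence $G$ fails to be a properly ascending or descending HNN extension if and only if $\Sigma(G)$ is \emph{symmetric}, i.e.\ $[\phi]\in\Sigma(G)\Leftrightarrow[-\phi]\in\Sigma(G)$ for every primitive $\phi$. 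The lemma then amounts to the assertion that symmetry of $\Sigma(G)$ forces symmetry of $\Sigma(H)$ for every $H\leq_{f}G$.

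First I would reduce to the normal case. For $H\leq_{f}G$ let $N=\mathrm{core}(H)\trianglelefteq_{f}G$ be its normal core, so $N\leq_{f}H$. The BNS invariant is compatible with finite-index subgroups: for a character $\chi$ of $H$ one has $[\chi]\in\Sigma(H)\Leftrightarrow[\chi|_{N}]\in\Sigma(N)$ (and $\chi|_{N}\neq 0$ since $N\leq_{f}H$). Thus if $H$ were one-sided along some primitive $\chi$, then $N$ would be one-sided along $\chi|_{N}$, and it suffices to show that no normal $N\trianglelefteq_{f}G$ is one-sided. For such $N$, the finite quotient $Q=G/N$ acts on $S(N)$ by conjugation preserving $\Sigma(N)$ (as $\Sigma$ is $\mathrm{Aut}$-invariant), and restriction identifies $S(G)$ with the fixed subsphere $S(N)^{Q}$, under which $\Sigma(G)=\Sigma(N)\cap S(N)^{Q}$. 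In particular, if a one-sided character of $N$ could be chosen $Q$-invariant, it would lie in $S(G)$ and contradict the assumed symmetry of $\Sigma(G)$ at once.

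The difficulty is therefore confined to one-sided characters of $N$ that are \emph{not} $Q$-invariant. The naive remedy---averaging a one-sided $\chi$ over its orbit to produce an invariant $\Phi=\sum_{q\in Q}\chi^{q}$ (well defined since inner automorphisms act trivially on $\hom(N,\Z)$)---does not work formally, because $\Sigma$ is not convex: neither $[\Phi]\in\Sigma(N)$ nor $[-\Phi]\notin\Sigma(N)$ follows from the corresponding statements for the summands, and one must moreover rule out $\Phi=0$. To close this gap I would invoke the explicit description of $\Sigma$ for groups of PL homeomorphisms of $[0,\ell]$ (Bieri--Strebel, as recorded in the Appendix): $\Sigma^{c}(G)$ is governed by the two germ homomorphisms $\lambda_{0},\lambda_{\ell}\colon G\to \langle n_{1},\dots,n_{k}\rangle$ at the endpoints. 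The essential point is that any finite-index $N\leq_{f}G$ is again a group of PL homeomorphisms of $[0,\ell]$ of exactly this type, whose germ maps are the restrictions $\lambda_{0}|_{N},\lambda_{\ell}|_{N}$ with finite-index images; feeding this into the same computation shows that $\Sigma^{c}(N)$ is controlled by identical endpoint bookkeeping. Translating the symmetry of $\Sigma^{c}(G)$ into the resulting balance condition on the endpoint germ data, and observing that this balance is inherited verbatim by the restricted germ maps of $N$, yields symmetry of $\Sigma^{c}(N)$.

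The main obstacle I anticipate is precisely this last step: verifying that a finite-index subgroup $N$ still satisfies the hypotheses under which the Bieri--Strebel germ computation of $\Sigma$ applies (enough transitivity of the action on the relevant breakpoints), and then matching the germ descriptions of $\Sigma^{c}(G)$ and $\Sigma^{c}(N)$ so that the symmetry transfers. The non-convexity of $\Sigma$ is exactly what prevents a purely formal averaging argument and forces one to pass through this concrete description.
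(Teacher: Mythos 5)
Your reduction of the statement to symmetry of the BNS invariant, and your treatment of those characters of $N$ that are restrictions of characters of $G$, follow the same general framework as the paper (which combines the characterization of properly ascending extensions from \cite[Theorem~2.1]{BGK10} with BNS theory). But there is a genuine gap exactly where you yourself locate the main obstacle: the case of one--sided characters of $N$ that are \emph{not} restricted from $G$. For this case you propose to rerun the Bieri--Neumann--Strebel endpoint--germ computation (\cite[Theorem~8.1]{BNS87}) for $N$ itself, but you never verify its hypotheses: that $N$ is irreducible (no fixed point in the interior), and, more seriously, that the germ homomorphisms $\lambda|_{N},\rho|_{N}$ are \emph{independent}, i.e.\ $\lambda(N)=\lambda(N\cap \ker~\rho)$ and $\rho(N)=\rho(N\cap \ker~\lambda)$. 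The explicit PL elements witnessing independence for $G$ (constructed in the paper's Appendix) have no reason to lie in a given finite index subgroup, so this is not a formality; your argument stops precisely at the point where the work has to be done.

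The idea you are missing is Stein's theorem that the commutator subgroup of a generalized Thompson group is \emph{simple} (\cite[Theorem~5.1]{St92}). Passing to the normal core, take $H\trianglelefteq_{f}G$; then $[H,H]$ is a normal subgroup of the simple group $[G,G]$, whence $[H,H]=[G,G]$, and together with the transfer this shows that restriction $H^{1}(G;\R)\to H^{1}(H;\R)$ is an \emph{isomorphism}. So the hard case you are worried about is vacuous: every character of $H$ (hence of your $N$) is the restriction of a character of $G$, and it is rational whenever the original one is, since restriction is an injective map defined over the rationals. The paper then concludes with \cite[Theorem~H]{BNS87}, which applies because $[G,G]=[H,H]\leq H$ and gives $f^{*}(S(G)\setminus \Sigma(G))=S(H)\setminus \Sigma(H)$; thus a one--sided class of $H$ descends to a one--sided class of $G$, contradicting the hypothesis via \cite[Theorem~2.1]{BGK10}. (Your appeal to the general finite--index compatibility of $\Sigma$, namely $[\chi]\in\Sigma(H)\Leftrightarrow [\chi|_{N}]\in\Sigma(N)$, could serve the same purpose as Theorem~H once surjectivity of restriction is known, but it is itself a nontrivial result that you assert without proof or reference, and it cannot substitute for the missing simplicity input.)
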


\begin{proof}  Let $H \leq_{f} G$ be a finite index subgroup, and denote by $f\colon H \to G$ the inclusion map. We start by showing that $f^{*}\colon H^{1}(G;\R) \to H^{1}(H;\R)$ is an isomorphism. By the existence of the transfer map, injectivity is clear, so  we just need to show that $b_{1}(G) = b_{1}(H$). As it suffices to prove this result for the normal core of $H$, we will assume without loss of generality that $H \trianglelefteq_f G$. Then $[H,H]$ is a normal subgroup of $[G,G]$. And as the latter is simple (see e.g. \cite[Theorem~5.1]{St92}, based also on work of Bieri and Strebel) we must have $[H,H]=[G,G]$. The map \[ H_1(H;\Z) = H/[H,H]=H/[G,G] \to G/[G,G] = H_{1}(G;\Z) \] is therefore injective, and the result follows. 
At this point an endless amount of patience would allow a proof of the statement directly in terms of generators and relations, but we can proceed in a simpler way using BNS invariants. In fact, if we assume by contradiction that there exists a primitive class $\chi\colon H \to \Z$ that corresponds to a properly ascending extension, then the (projective class) of $\chi$ must satisfy $[\chi] \in \Sigma(H)$ and  $[\chi] \notin - \Sigma(H)$, where \[ \Sigma(H) \subset S(H) := (H^{1}(H;\R) \setminus 0)/\R^*\] is the BNS invariant of  $H$. Surjectivity of $f^{*}\colon H^{1}(G;\R) \to H^{1}(H;\R)$ entails that $[\chi] \in S(H)$ is the image of some class $[\phi] \in S(G)$, where we can assume that $\phi$ is a primitive class in $H^1(G;\Z)$. But \cite[Theorem H]{BNS87} guarantees now that $f^{*}(S(G) \setminus \Sigma(G)) = S(H) \setminus \Sigma(H)$ (we are  using here the fact that $[H,H] = [G,G]$). It follows that $[\phi] \in \Sigma(G)$ and  $[\phi] \notin - \Sigma(G)$. These two conditions characterize properly ascending HNN extensions (see e.g. \cite[Theorem 2.1]{BGK10}), hence $G$ can be written as an ascending HNN extension. (The proof in the descending case is \textit{verbatim}.) This contradicts the assumption. 
\end{proof}

\section{Rank Gradient and BNS Invariant} \label{sec:rgbns}
While the rank gradient of infinite cyclic covers of $3$--manifold groups and, now, K\"ahler manifolds follow the dichotomy of Theorem \ref{thm:mainproof}, this  property should be counted more of as a property of these very peculiar classes of groups than a common phenomenon. Stated otherwise, the vanishing of the rank gradient $\rg(G,\phi)$ is a necessary but, as we discuss below, not a sufficient condition to decide that $\phi$ belongs to the BNS invariant of $G$.

The first example of this fact comes from Thompson's group $F$. As discussed above (cf. Lemma \ref{lem:free}) this group has vanishing rank gradient for all classes $\phi \in H^1(F;\Z)$, but two pairs of classes $\{\pm \phi_{i},i =1,2\}$ (corresponding to properly ascending HNN extensions) fail to satisfy the statement of Theorem \ref{thm:mainproof}. In this case, $\{\phi_{i},i =1,2\}$ belong to the BNS invariant of  $F$, but $\{- \phi_{i},i =1,2\}$ do not. 

A second, more compelling example is the following, that shows that the discrepancy between the information contained in the rank gradient and in the BNS invariants is not limited to properly ascending or descending extensions. Consider the group
\begin{equation} \label{eq:group} \pi=\ll t,a,b\,|\, t^{-1}at=a^2, t^{-1}b^2t=b\rr.\end{equation}
Here $H_1(\pi;\Z) = \Z$. We claim that $\pi$ cannot be written as a properly ascending or descending HNN extension. To prove this we need the following lemma. Here we denote by  
$\Delta_{\pi,\phi}\in \zt$ the Alexander polynomial corresponding to a pair $(\pi,\phi)$,
i.e. the order of the Alexander $\zt$--module \[ H_1(\pi;\zt)=H_1(\ker(\phi)).\]

\begin{lemma}\label{lem:alex}
Let $\pi$ be a group  together with an epimorphism $\phi\co \pi\to \Z$.
If the pair $(\pi,\phi)$ corresponds to an ascending (resp. descending) HNN extension of a finitely generated group, then
the top (resp. bottom) coefficient of $\Delta_{\pi,\phi}$ equals $\pm 1$.
\end{lemma}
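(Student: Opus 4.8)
The plan is to compute $\Delta_{\pi,\phi}$ as the order of an explicit cokernel and then to exploit the fact that monicity in $t$ is inherited by divisors. I treat the ascending case only: the descending case follows by applying the result to the pair $(\pi,-\phi)$, since negating $\phi$ turns a descending extension into an ascending one and amounts to the substitution $t\mapsto t^{-1}$ in $\Delta_{\pi,\phi}$, which interchanges its top and bottom coefficients. So assume $\pi=\langle B,t\mid t^{-1}A_+t=A_-\rangle$ with $A_+=B$ finitely generated, $\phi(t)=1$ and $\phi|_B=0$. Conjugation by $t$ defines an injective endomorphism $\psi\co B\to B$, $\psi(b)=t^{-1}bt$, with image $A_-$; let $\psi_*$ be the induced endomorphism of $H_1(B;\Z)$, fix a presentation $\Z^b\xrightarrow{R}\Z^a\onto H_1(B;\Z)$ of $H_1(B;\Z)$ as an abelian group, and choose a lift $\widetilde P\in M_a(\Z)$ of $\psi_*$.

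The key step is to identify the Alexander module $H_1(\pi;\zt)=H_1(\ker\phi;\Z)$. Since $A_+=A_-\le B$ all lie in $\ker\phi$, the $\zt$-coefficient system restricts trivially to these subgroups, so that $H_i(B;\zt)=H_i(B;\Z)\otimes_\Z\zt$ and likewise for $A$. In the Mayer--Vietoris sequence of the HNN extension with $\zt$ coefficients, the degree-zero connecting map $H_0(A;\zt)\to H_0(B;\zt)$ is multiplication by $1-t$ on $\zt$, which is injective; hence the connecting homomorphism out of $H_1(\pi;\zt)$ is zero and $H_1(\pi;\zt)=\op{coker}(\partial_1)$, where $\partial_1\co H_1(A;\zt)\to H_1(B;\zt)$ is the degree-one boundary. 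With the module convention $t\cdot x=t^{-1}xt$ this boundary is $\mathrm{id}-t^{-1}\psi_*$; clearing the unit $t$ and adjoining the $t$-independent torsion relations $R$ of $H_1(B;\Z)$, we see that $H_1(\pi;\zt)$ is presented by the block matrix $[\,tI_a-\widetilde P\mid R\,]$. (Equivalently, one reads off the same presentation directly from $\ker\phi=\bigcup_{n\ge0}t^{n}Bt^{-n}$, an increasing union with transition maps $\psi_*$ on which $t$ acts as the shift.)

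To conclude, note $\det(tI_a-\widetilde P)\neq0$, so the module is $\zt$-torsion and $\Delta_{\pi,\phi}$ is defined as the generator of the smallest principal ideal containing the first Fitting ideal, i.e.\ the greatest common divisor of the maximal minors of the presentation matrix. In particular $\Delta_{\pi,\phi}$ divides the maximal minor $\det(tI_a-\widetilde P)$, which is the characteristic polynomial of the integer matrix $\widetilde P$ and is therefore monic of degree $a$, i.e.\ has top coefficient $\pm1$. Finally, if a polynomial in $\zt$ with top coefficient $\pm1$ factors, then each factor has top coefficient $\pm1$, since the top coefficient is multiplicative and $\pm1$ admits only units of $\Z$ as divisors; as $\Delta_{\pi,\phi}$ divides such a polynomial, its own top coefficient is $\pm1$.

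The genuine content lies in the identification of the Alexander module up to the finite-order correction from $\mathrm{tors}\,H_1(B;\Z)$, together with careful sign and convention bookkeeping: one must fix the $\zt$-action (here $t\cdot x=t^{-1}xt$) so that the stable letter produces the matrix $tI_a-\widetilde P$, which is monic at the top, rather than $I_a-t\widetilde P$, which is monic at the bottom --- this is exactly what pairs ``ascending'' with the top coefficient (and, after $t\mapsto t^{-1}$, ``descending'' with the bottom). I expect this bookkeeping to be the main subtlety; by contrast the torsion of $H_1(B;\Z)$ causes no trouble, as the extra columns $R$ can only force $\Delta_{\pi,\phi}$ to be a divisor of the monic characteristic polynomial, and divisibility alone already pins the top coefficient to $\pm1$.
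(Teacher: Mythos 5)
Your proof is correct, and it reaches the decisive presentation matrix by a genuinely different route from the paper's. The paper starts from a group presentation of the finitely generated base $B$ (possibly with infinitely many relators), writes down the induced presentation of $\pi$, and applies Fox calculus; the resulting matrix is identified as a presentation of the relative module $H_1(\pi,\langle t\rangle;\zt)$, which is then compared with $H_1(\pi;\zt)$ via the long exact sequence of the pair, and finally the Noetherianness of $\zt$ is invoked to cut the possibly infinite family of relation columns down to finitely many, leaving a matrix of the form $(\,t\,\mathrm{id}_k-P\mid Z\,)$ with $P$, $Z$ integral. You bypass presentations of $B$ altogether: the Mayer--Vietoris sequence of the HNN splitting (equivalently, your direct-limit description $\ker~\phi=\bigcup_{n\ge0}t^nBt^{-n}$) identifies $H_1(\pi;\zt)$ with $\operatorname{coker}(t\,\mathrm{id}-\psi_*)$ acting on $H_1(B;\Z)\otimes_\Z\zt$, and since a finitely generated abelian group is automatically finitely presented, the finite matrix $(\,tI_a-\widetilde P\mid R\,)$ comes for free, with no Noetherian reduction. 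The endgame is identical in both arguments: the order divides the monic minor $\det(tI_a-\widetilde P)$, and a divisor of a Laurent polynomial with top coefficient $\pm1$ has top coefficient $\pm1$. Your care with the module convention is well placed, and your choice $t\cdot x=t^{-1}xt$ is the one the statement implicitly requires: it is the convention under which the paper's later example $\langle t,a,b\mid t^{-1}at=a^2,\ t^{-1}b^2t=b\rangle$ has $\Delta=(t-2)(2t-1)$, with the monic-at-top factor $t-2$ coming from the ascending relation; your reduction of the descending case to the ascending one via $(\pi,-\phi)$ is likewise a clean substitute for the paper's remark that the descending proof is identical. One terminological slip: the ideal generated by the maximal minors is the zeroth Fitting ideal (the first elementary ideal), not the first Fitting ideal; since you immediately state that you take the gcd of the maximal minors, this does not affect the argument.
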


\begin{proof}
Let $\pi$ be a group together with an epimorphism $\phi\co \pi\to \Z=\ll t\rr$.
We denote by $\phi$ also the corresponding ring homomorphism $\Z[\pi]\to \Z[\Z]=\zt$. Assume $\pi$ can be written as an ascending HNN extension of a finitely generated group $B$. In this case it is convenient to write $\pi=\ll t,B\,|\,t^{-1}Bt = \varphi(B)\rr$ where $\varphi\co B\to B$ is a monomorphism, $B = C$ and $D = \varphi(B) \leq B$.
Now let \[ B=\ll g_1,\dots,g_k\,|\, r_1,r_2,\dots\rr \] be a presentation for $B$ with finitely many generators. We denote by $n\in \N\cup \{\infty\}$ the number of relations.
We can write 
\[ \pi=\ll t,g_1,\dots,g_k\,|\, t^{-1}g_1t=\varphi(g_1),\dots,t^{-1}g_kt=\varphi(g_k), r_1,r_2,\dots\rr.\]
We continue with the following claim.

\begin{claim}
The matrix
\[ \left(  \phi\left(\frac{\partial t^{-1}g_it}{\partial g_j}\right)_{i,j=1,\dots,k}\,  \phi\left(\frac{\partial r_i}{\partial g_j}\right)_{i=1,\dots,j=1,\dots,k} \right)\]
is a presentation matrix for $H_1(\pi;\zt)$.
\end{claim}

We denote by $X$ the usual CW-complex corresponding to the presentation for $\pi$ and we denote by $Y$ the CW-complex corresponding to the presentation $\ll t\rr$. 
By Fox calculus the chain complex $C_*(X;\zt)$ is given as follows.
\[  \zt^k\oplus \zt^n\,\xrightarrow{\bp  \phi\left(\frac{\partial t^{-1}g_it}{\partial g_j}\right)&  \phi\left(\frac{\partial r_i}{\partial g_j}\right)\\[2mm]
\phi\left(\frac{\partial t^{-1}g_it}{\partial t}\right)&  \phi\left(\frac{\partial r_i}{\partial t}\right)\ep}\zt^k\oplus \zt
\xrightarrow{\big(1-x_1\,\,\dots\,\,1-x_k\,\,1-t\big)}\zt\to 0.\]
Here, for space reasons we left out the range of the indices $i$ and $j$.
The chain complex 
\[ C_*(Y;\zt)\,\,=\,\,0\to \zt\xrightarrow{1-t}\zt\to 0\]
 is an obvious subcomplex of $C_*(X;\zt)$.
It is now straightforward to see that the matrix in the claim is a presentation matrix 
for the relative Alexander module 
\[ H_1(\pi,\ll t\rr;\zt)=H_1\Big(C_*(X;\zt)/C_*(Y;\zt)\Big).\]
But considering the long exact sequence in homology with $\zt$-coefficients of the pair $(\pi,\ll t\rr)$ shows that
$H_1(\pi,\ll t\rr;\zt)\cong H_1(\pi;\zt)$. This concludes the proof of the claim.

We now recall that  $\zt$ is a Noetherian ring. This implies that every ascending chain of submodules of a given finitely generated module stabilizes.
This  in turn implies that every finitely generated module over $\zt$ which is given by infinitely many relations is already described by a finite subset of those relations. (We refer to \cite[Chapter~X.\S 1]{Lan02} for details.)

It thus follows that there exists an $l\in \N$ such that the following matrix
is already a presentation matrix for $H_1(\pi;\zt)$.
\[ \left(  \phi\left(\frac{\partial t^{-1}g_it}{\partial g_j}\right)_{i,j=1,\dots,k}\,  \phi\left(\frac{\partial r_i}{\partial g_j}\right)_{i=1,\dots,l,j=1,\dots,k} \right).\]

The block matrix on the right is a matrix over $\Z$, whereas the block matrix on the left is a matrix of the form $t\id_k-B$ where $B$ is a matrix over $\Z$. If we now take the gcd of the $k\times k$-minors of the matrix we see that the top coefficient is $1$. The proof for the descending case is identical. \end{proof}

We have now the following:

\begin{proposition} Let $\pi$ be the finitely presented group of Eq. (\ref{eq:group}) and let $\phi \in H^1(\pi;\Z) \cong \Z$ be the generator given by the epimorphism of $Hom(\pi,\Z)$ determined  by $\phi(t)=1$ and $\phi(a)=\phi(b)=0$.
The pair $(\pi,\phi)$ has zero rank gradient but it cannot be written as an ascending or descending HNN extension.
\end{proposition}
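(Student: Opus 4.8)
The plan is to establish the two assertions of the proposition separately, drawing on the machinery developed earlier in the excerpt. For the vanishing of the rank gradient, I would first observe that the group $\pi=\ll t,a,b\,|\, t^{-1}at=a^2, t^{-1}b^2t=b\rr$ is an HNN extension with stable letter $t$ over the finitely generated base $B=\ll a,b\rr \leq \ker~\phi$, where $\phi(t)=1$ and $\phi(a)=\phi(b)=0$. The two relations exhibit $\pi$ as an ascending HNN extension in the variable $a$ (since $t^{-1}at=a^2 \in \ll a\rr$) and a descending one in the variable $b$ (since $t^{-1}b^2t=b$, i.e.\ $tbt^{-1}=b^2$). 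The key point for the rank gradient is that Schreier's rewriting process shows that $\pi_i := \ker(\pi \xrightarrow{\phi} \Z \to \Z/i)$ is generated by $\{a,b,t^i\}$, so $\rk(\pi_i)$ is uniformly bounded above by $3$. By exactly the argument of the ``if'' direction of Theorem \ref{thm:mainproof} (or of Corollary \ref{cor:ascending}), this uniform bound forces $\rg(\pi,\phi)=0$.

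For the second assertion, the natural tool is Lemma \ref{lem:alex}: I would argue by contraposition using the Alexander polynomial $\Delta_{\pi,\phi}$. If $(\pi,\phi)$ were an ascending HNN extension of a finitely generated group, then the top coefficient of $\Delta_{\pi,\phi}$ would be $\pm 1$; if descending, the bottom coefficient would be $\pm 1$. So it suffices to compute $\Delta_{\pi,\phi}$ and check that \emph{neither} its top nor its bottom coefficient is a unit. I would compute the Alexander module $H_1(\pi;\zt)=H_1(\ker\phi)$ via Fox calculus on the given two--generator, two--relator presentation. The two relations $r_1 = t^{-1}at\,a^{-2}$ and $r_2 = t^{-1}b^2t\,b^{-1}$ yield (after abelianizing in the $\ker\phi$ direction, i.e.\ applying $\phi$ and recording the $t$-variable) a $2\times 2$ presentation matrix over $\zt$ whose diagonal entries govern the relations on $a$ and $b$ respectively: the $a$-relation contributes a factor of the form $t-2$ and the $b$-relation a factor of the form $2t-1$ (up to signs and units), while the off-diagonal entries vanish since the two relations involve disjoint generators.

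Carrying out this computation, I expect $\Delta_{\pi,\phi}$ to be (up to units) $(t-2)(2t-1) = 2t^2 - 5t + 2$, whose top coefficient is $2$ and whose bottom coefficient is $2$ — neither a unit in $\zt$. By Lemma \ref{lem:alex}, this simultaneously rules out an ascending HNN structure (top coefficient not $\pm 1$) and a descending one (bottom coefficient not $\pm 1$), which is precisely the desired conclusion. The main obstacle I anticipate is the careful Fox-calculus bookkeeping: one must verify that the relevant presentation matrix really does reduce to the product of the two diagonal factors, in particular confirming that the cross terms drop out and that the top and bottom coefficients are read off correctly as the $\gcd$ of the appropriate minors (here the module is cyclic, so $\Delta_{\pi,\phi}$ is just the single entry/determinant). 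A secondary subtlety is confirming that Lemma \ref{lem:alex} applies in \emph{both} directions to the same class $\phi$, so that a single Alexander polynomial computation simultaneously obstructs both the ascending and the descending possibilities; this follows because the lemma's two cases test the two extremal coefficients independently.
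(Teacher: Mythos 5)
Your proposal is correct and takes essentially the same route as the paper's own proof: the paper likewise bounds $\rk(\pi_i)\leq 3$ by a Reidemeister--Schreier computation (reducing the presentation of $\pi_i$ to the generators $a,b,t^i$, exactly as you claim), then computes $\Delta_{\pi,\phi}=(t-2)(2t-1)=2t^2-5t+2$ by Fox calculus and invokes Lemma \ref{lem:alex} to exclude the ascending and descending cases simultaneously, since both extreme coefficients equal $2$. Two parenthetical slips in your write-up are harmless: which of the two relations contributes the factor $t-2$ versus $2t-1$ depends on the chosen $\zt$-module convention, and the Alexander module is actually $\zt/(t-2)\oplus\zt/(2t-1)$, which is \emph{not} cyclic (the ideal $(t-2,2t-1)$ is proper, as it is contained in $(3,t+1)$) --- but since the Fox presentation matrix is square and diagonal, $\Delta_{\pi,\phi}$ is its determinant in any case, so the conclusion is unaffected.
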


\begin{proof}
We denote by $\pi_n$ the kernel of the epimorphism $\pi\xrightarrow{\phi}\Z\to \Z/n$. It follows from the Reidemeister-Schreier process, that $\pi_n$ has a presentation of the form
\[ \ba{rl} \ll s,a_0,\dots,a_{n-1},b_0,\dots,b_{n-1}|&a_1=a_0^2,\dots,a_{n-1}=a_{n-2}^2,s^{-1}a_0s=a_{n-1}^2,\\ &
b_1^2=b_0,\dots,b_{n-1}^2=b_{n-2},s^{-1}b_0^2s=b_{n-1}^2\rr\end{array}\]
Here, the map $\pi_n\to \pi$ is given by sending $s$ to $t^n$ and sending
$a_i\mapsto t^{-i}at^i$ and $b_i\mapsto t^{-i}bt^i$.
The fact that this is a correct presentation for $\pi_n$ can also be seen easily by thinking of $\pi$ as the fundamental group of a 2-complex with one 0-cell, three 1-cells and two 2-cells.

Note that in the presentation of $\pi_n$ we can remove the generators $a_1,\dots,a_{n-1}$ and $b_1,\dots,b_{n-1}$. We can thus find a presentation for $\pi_n$ with three generators. It is now immediate that $\rg(\pi,\phi)$ is zero.
\\

A straightforward calculation using Fox calculus shows that 
the  Alexander polynomial of  $(\pi,\phi)$ is $(t-2)(2t-1)=2-5t+2t^2$.
It now follows from Lemma \ref{lem:alex} that the pair $(\pi,\phi)$  corresponds neither to an ascending nor descending HNN extension. 
\end{proof}
\appendix
\setcounter{secnumdepth}{0}
\section{Appendix}
We include in this appendix definitions and results needed in the proof of Theorem \ref{thm:gnk}. The purpose is to show that  the class of groups discussed in Section \ref{stein} satisfies the assumptions of Theorem 8.1 of \cite{BNS87}, which thus determines their BNS invariant. This is arguably standard fare for the practitioners of Thompson--type groups, but may be  less obvious for others, so we provide here the details.

Let $G$ be a subgroup of the group of orientation--preserving PL homeomorphisms of an interval $[0,\ell]$.  We say that such a group is \textit{irreducible} if it has no fixed point in $(0,\ell)$. Given such a group, we can define two homomorphisms $\lambda, \rho\colon G \to \R_{+}$ with values in the multiplicative group $ \R_{+}$ as follows: \[ \lambda(g) := \frac{dg}{dt}(0), \,\  \rho(g) := \frac{dg}{dt}(\ell), \] which return the slope at the endpoints. We say that $\lambda$ and $\rho$ are \textit{independent} if $\lambda(G)= \lambda (\Ker \rho)$ and $\rho (G)= \rho (\Ker \lambda)$. Equivalently, \[ G \xrightarrow{(\rho,\lambda)} \lambda(G) \times \rho(G) \] is an epimorphism. Then we have the following theorem (see \cite[Theorem~8.1]{BNS87}, as well as \cite[Proposition~8.2]{Bro87b}).

\begin{theorem*} (Bieri--Neumann--Strebel) Let $G$ be a finitely generated, irreducible subgroup of the group of orientation--preserving PL homeomorphisms of an interval $[0,\ell]$. Then if $\lambda$ and $\rho$ are independent, the set of exceptional characters of $G$ is given by $[\log(\lambda)],[\log(\rho)] \in \hom(G,\R) = H^{1}(G;\R)$.  \end{theorem*}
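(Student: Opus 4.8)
The plan is to recast the statement as a computation of the Bieri--Neumann--Strebel invariant $\Sigma(G) \subseteq S(G) = (H^1(G;\R)\setminus 0)/\R^*$ and its complement. A nonzero character is exceptional precisely when its projective class lies in $\Sigma^c(G) := S(G)\setminus \Sigma(G)$, so the theorem is equivalent to the assertion $\Sigma^c(G) = \{[\log\lambda],[\log\rho]\}$. First I would record that $\lambda,\rho\colon G \to \R_{+}$ are genuine homomorphisms, so that $\log\lambda,\log\rho \in \hom(G,\R)=H^1(G;\R)$ are honest characters, and that by independence they determine distinct projective classes. Throughout I would use the combinatorial description of the invariant: fixing a finite generating set, $[\chi]\in \Sigma(G)$ if and only if the full subgraph of the Cayley graph spanned by $\{g : \chi(g)\geq 0\}$ is connected (equivalently, the associated positive submonoid is ``finitely generated up to $\ker\chi$''). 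This is the form of $\Sigma$ best suited to groups acting on an interval, and it is the point of contact with Brown's treatment in \cite[Proposition~8.2]{Bro87b}.

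The containment $\{[\log\lambda],[\log\rho]\}\subseteq \Sigma^c(G)$ I would establish by a dynamical analysis at the two endpoints, which are the only points of $[0,\ell]$ that the group approaches from a single side. Consider $\chi=\log\rho$. An element $g$ with $\rho(g)=1$ is PL, fixes $\ell$ and has slope $1$ there, hence is the identity on a one--sided neighbourhood $[\ell-\varepsilon_g,\ell]$; thus $\ker\rho$ is the increasing union of the subgroups $K_\varepsilon$ of homeomorphisms supported in $[0,\ell-\varepsilon]$, and conjugation by an element with $\rho>1$ pushes such supports monotonically toward $\ell$. This one--sidedness is exactly what defeats the connectivity criterion in one of the two directions $\pm\log\rho$: elements of $K_\varepsilon$ cannot be joined, within the positive Cayley subgraph, to the fixed generating set without first crossing to arbitrarily small $\varepsilon$, and tracking the sign places $[\log\rho]\in\Sigma^c(G)$. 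The argument at the left endpoint $0$, with $\lambda$ in place of $\rho$, is \emph{verbatim}.

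The reverse inclusion---that \emph{every} class not proportional to $\log\lambda$ or $\log\rho$ lies in $\Sigma(G)$---is the substance of the theorem and the step I expect to be the main obstacle. Here the hypotheses of irreducibility and, crucially, independence enter: surjectivity of the map $(\rho,\lambda)$ onto $\lambda(G)\times\rho(G)$ means the germs at the two ends can be prescribed independently, while an interior point, being approachable from both sides, contributes no one--sided obstruction. Given $\chi$ not proportional to either germ character, the plan is to exploit this freedom to realize, with $\chi$--value bounded below, the finitely many elementary moves (supported near breakpoints, with slopes in $\langle n_1,\dots,n_k\rangle$) needed to connect an arbitrary $g$ with $\chi(g)\geq 0$ back to a fixed finite set, thereby verifying connectivity of the positive Cayley subgraph and hence $[\chi]\in\Sigma(G)$. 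The delicate point is precisely this bookkeeping: one must show that the only obstructions to connectivity come from the two one--sided ends, so that independence together with the PL structure (finitely many breakpoints and prescribed slopes) suffices to absorb all interior behaviour. This is the content of \cite[Theorem~8.1]{BNS87}, and I would model the argument on their analysis of $\Sigma$ for groups of PL homeomorphisms, reducing a general character to the two endpoint germs.
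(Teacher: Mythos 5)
The paper does not actually prove this statement: it is quoted from the literature (\cite[Theorem~8.1]{BNS87}, see also \cite[Proposition~8.2]{Bro87b}), with the sole added remark that the proof given there for the unit interval $[0,1]$ carries over to $[0,\ell]$ with obvious modifications. Your proposal, at its decisive step --- the inclusion of every class other than $[\log\lambda]$ and $[\log\rho]$ into $\Sigma(G)$ --- likewise defers to \cite[Theorem~8.1]{BNS87}, so in substance you take the same route as the paper: an appeal to the cited result rather than an independent argument. The material you do supply is reasonable and standard: the reformulation of the statement as $\Sigma^c(G)=\{[\log\lambda],[\log\rho]\}$, and the observation that an element with slope $1$ at an endpoint is the identity on a one-sided neighbourhood of that endpoint (valid since the maps are PL with finitely many singularities), which is the germ of the usual proof that the two endpoint classes are exceptional.

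Two caveats, in case you intended a self-contained proof. First, the reverse inclusion you defer to \cite{BNS87} is not a technical remainder but the entire content of the theorem; citing it means no independent proof has been given, which is a genuine gap under that reading (though it is exactly the gap the paper itself leaves, deliberately, by stating this as a quoted theorem). Second, two smaller points in the part you do argue: the sign bookkeeping --- that it is $[\log\rho]$ rather than $[-\log\rho]$ which falls outside $\Sigma(G)$ --- is waved at rather than carried out, and this sign is precisely what distinguishes ascending from descending behaviour (compare the paper's use of \cite[Theorem 2.1]{BGK10} in Section 3); and your reference to ``slopes in $\langle n_1,\dots,n_k\rangle$'' imports hypotheses of the generalized Thompson groups into a statement that concerns arbitrary finitely generated irreducible subgroups of PL homeomorphisms of $[0,\ell]$, where no such restriction on slopes or breakpoints is available.
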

(The statement in  \cite{BNS87} covers subgroups of the group of orientation--preserving PL homeomorphisms of the unit interval $[0,1]$, but the proof applies with obvious modifications to the case stated here.)

We will apply this result to the study of the BNS invariants for the case of the generalized Thompson groups $F(\ell,\Z[\frac{1}{n_1\dots n_k}],\langle n_1,\dots,n_k\rangle)$ which, we recall, are defined as follows. Given a nontrivial multiplicative subgroup \[ \langle n_1,\dots,n_k\rangle = \{n_1^{j_1}\dots n_k^{j_k}|j_i \in \Z\} \leq \R^{+} \] (where $\{n_1,\dots,n_{k}\}$ is a positive integer basis  of the multiplicative group), we denote by  $F(\ell,\Z[\frac{1}{n_1\dots n_k}],\langle n_1,\dots,n_k\rangle)$ the group of orientation--preserving PL homeomorphism of the interval $[0,\ell]$, where
 $0 < \ell \in \Z[\frac{1}{n_1\dots n_k}]$, with finitely many singularities all in $\Z[\frac{1}{n_1\cdots n_k}]$ and slopes in $\langle n_1,\dots,n_k\rangle$.  
\begin{corollary*} Let   $G = F(\ell,\Z[\frac{1}{n_1\cdots n_k}],\langle n_1,\dots,n_k\rangle)$ be a generalized Thompson group; then $G$ admits exceptional characters. \end{corollary*}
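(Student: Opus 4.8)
The plan is to apply the Bieri--Neumann--Strebel theorem quoted immediately above to $G = F(\ell,\Z[\frac{1}{n_1\cdots n_k}],\langle n_1,\dots,n_k\rangle)$, so the entire task reduces to verifying the three hypotheses of that theorem: that $G$ is finitely generated, that $G$ is irreducible, and that the boundary slope homomorphisms $\lambda,\rho\co G \to \R_+$ are independent. Once these are checked, the theorem hands us the exceptional characters $[\log(\lambda)]$ and $[\log(\rho)]$ directly, and it only remains to observe that $\lambda$ and $\rho$ are nontrivial (so that these characters exist as nonzero elements of $H^1(G;\R)$). Finite generation is part of the standing description of these groups (they are finitely presented, as recorded in Section \ref{stein}), so I would cite that and move on.

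First I would dispose of irreducibility. A homeomorphism $g \in G$ fixing an interior point $x \in (0,\ell)$ with $x \in \Z[\frac{1}{n_1\cdots n_k}]$ is no obstruction, since $G$ contains elements moving any such dyadic-type point; concretely, one exhibits for each admissible interior breakpoint a PL homeomorphism in $G$ that does not fix it, using that the slope group $\langle n_1,\dots,n_k\rangle$ is nontrivial and that $\Z[\frac{1}{n_1\cdots n_k}]$ is dense in $[0,\ell]$. Since a point fixed by all of $G$ would in particular be fixed by such an element, $G$ has no global interior fixed point and is irreducible.

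The main obstacle is the independence of $\lambda$ and $\rho$, i.e.\ showing that $\lambda(G) = \lambda(\Ker \rho)$ and $\rho(G) = \rho(\Ker \lambda)$. Here I would argue that $\lambda(G) = \rho(G) = \langle n_1,\dots,n_k\rangle$, since every element of the slope group is realized as a boundary slope (build a PL homeomorphism supported near an endpoint with the prescribed germ, which is possible because $\ell$ and the breakpoints lie in $\Z[\frac{1}{n_1\cdots n_k}]$). The crux is the surjectivity of the map $G \xrightarrow{(\lambda,\rho)} \lambda(G)\times\rho(G)$: given a prescribed pair of germs $(s_0, s_\ell)$ at the two endpoints, one must construct a single element of $G$ realizing $s_0$ at $0$ and $s_\ell$ at $\ell$ simultaneously. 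This is achieved by supporting the two germs on disjoint subintervals near the respective endpoints and interpolating by the identity (or by an admissible PL map) on the middle, which works precisely because the two endpoints are far apart and the group allows finitely many breakpoints in $\Z[\frac{1}{n_1\cdots n_k}]$ with slopes in $\langle n_1,\dots,n_k\rangle$. Once surjectivity of $(\lambda,\rho)$ is established, independence follows, and the BNS theorem yields that $[\log\lambda]$ and $[\log\rho]$ are exceptional characters of $G$, completing the proof.
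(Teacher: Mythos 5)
Your overall strategy coincides with the paper's: check finite generation, irreducibility, and independence of the endpoint--slope homomorphisms $\lambda,\rho$, then quote the Bieri--Neumann--Strebel theorem. Your treatment of independence is also essentially the paper's proof: the paper realizes, for each $\nu$ in the slope group, an element with germ $\nu$ at $0$ and germ $1$ at $\ell$ by an explicit four--segment PL map that is supported on $[0,\tilde\ell]$, $\tilde\ell = \ell - \frac{\ell}{n}$, $n = n_1\cdots n_k$, and is the identity on $[\tilde\ell,\ell]$ --- exactly your ``prescribed germ near one endpoint, identity elsewhere'' scheme, with the coordinates written out so that all breakpoints land in $\Z[\frac{1}{n_1\cdots n_k}]$ and all slopes in $\langle n_1,\dots,n_k\rangle$. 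That part of your proposal is sound.

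However, your irreducibility argument has a genuine gap. You only produce, for each point of $(0,\ell)$ lying in $\Z[\frac{1}{n_1\cdots n_k}]$, an element of $G$ moving that point, and then conclude that no point of $(0,\ell)$ is fixed by all of $G$. A global fixed point need not lie in the ring, and the inference ``every ring point is moved by some element, and the ring is dense, hence there is no global fixed point'' is false for groups of this kind. Concretely, let $x \in (0,1)$ be irrational and consider the stabilizer of $x$ inside Thompson's group $F = F(1,\Z[\frac12],\langle 2\rangle)$: for any dyadic $d \neq x$ one can choose dyadics $a < d < b$ with $[a,b]$ disjoint from $x$ and an element of $F$ supported on $[a,b]$ with no fixed points in $(a,b)$; such an element lies in the stabilizer and moves $d$. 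So this subgroup moves every dyadic point of $(0,1)$, yet it has the global interior fixed point $x$ --- precisely the situation your argument fails to exclude. What is needed, and what the paper does, is a single element of $G$ with \emph{no} fixed points in $(0,\ell)$ at all: taking $\nu > 1$ in the slope group, the PL map through $(0,0)$, $\bigl(\frac{1}{\nu}\frac{\ell}{n},\frac{\ell}{n}\bigr)$, $\bigl(\ell-\frac{\ell}{n},\ell-\frac{1}{\nu}\frac{\ell}{n}\bigr)$, $(\ell,\ell)$ has slopes $\nu,1,\frac{1}{\nu}$, breakpoints in the ring, and its graph lies strictly above the diagonal on all of $(0,\ell)$, so it moves every interior point, ring or not. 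Alternatively, you could localize this construction to an interval $[a,b]$ with ring endpoints around an arbitrary given point; either repair is compatible with your framework, but as written your step does not establish irreducibility.
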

\begin{proof} The proof of the corollary consists in verifying that the assumptions of the Bieri--Neumann-Strebel Theorem are satisfied. First, we show that the group $G$ is irreducible: we can do so by constructing an explicit element $g \in G$ such that $g\colon [0,\ell] \to [0,\ell]$ has no other fixed points but $\{0,\ell\}$. To do so, choose an element $\nu \in \langle n_1,\dots,n_k\rangle$ such that $\nu > 1$,  and  define $n := \prod_{i=1}^{k} n_{i} \geq 2$. Consider the pairs $(x_1,y_1),(x_2,y_2) \subset [0,\ell] \times [0,\ell]$ with coordinates \[ (x_1,y_1)=\Big( \lmfrac{1}{\nu} \lmfrac{\ell}{n},\lmfrac{\ell}{n}\Big), \,\  (x_2,y_2)= \Big(\ell -  \lmfrac{\ell}{n},\ell - \lmfrac{1}{\nu} \lmfrac{\ell}{n}\Big). \] It is immediate to verify that as $\ell \in \Z[\frac{1}{n}]$, then $x_1 < x_{2} \in (0,\ell) \cap \Z[\frac{1}{n}]$. The piecewise linear graph joining the four points $(0,0),(x_1,y_1),(x_2,y_2),(\ell,\ell) \in [0,\ell] \times [0,\ell]$ is composed of three segments of slope respectively $\nu,1,\frac{1}{\nu} \in \langle n_1,\cdots,n_k\rangle$. Therefore, this represents the graph of an element $g \in G$ that has no fixed points in the interval $(0,\ell)$, hence the proof that $G$ is irreducible. On the left hand side of Figure \ref{tikz} we show the graph of such an element for the generalized Thompson group $ F(1,\Z[\frac{1}{6}],\langle 2,3\rangle)$, with $\nu = 2$.

Next, let's show that $\lambda$ and $\rho$ are independent. We will limit ourselves to show that $\lambda(G)= \lambda (\Ker \rho)$, the proof of the symmetric relation being identical. Namely, we will show that for any element $\nu  \in \langle n_1,\dots,n_k\rangle$ there exists an element $h \in \Ker \rho$ (i.e. for which $\rho(h) = \frac{dh}{dt}(l) = 1$) such that $\frac{dh}{dt}(0) = \nu$. The proof is based on a suitable scaled version of the construction above. Define ${\tilde \ell} = \ell - \frac{\ell}{n}$, and assume first $\nu > 1$. We consider now the pairs  \[ (x_1,y_1),(x_2,y_2) \subset [0,{\tilde \ell}] \times [0,{\tilde \ell}] \subset [0,\ell] \times [0,\ell] \] with coordinates 
\[ (x_1,y_1)= \Big( \lmfrac{1}{\nu} \lmfrac{\tilde \ell}{n},\lmfrac{\tilde \ell}{n}\Big), \,\  (x_2,y_2)= \Big({\tilde \ell} -  \lmfrac{\tilde \ell}{n},{\tilde \ell} - \lmfrac{1}{\nu} \lmfrac{\tilde \ell}{n}\Big). \] Similar to above, it is immediate to verify that  $x_1 < x_{2} < {\tilde \ell} \in (0,\ell) \cap \Z[\frac{1}{n}]$. The piecewise linear graph joining the five points $(0,0),(x_1,y_1),(x_2,y_2),({\tilde \ell},{\tilde \ell}),(\ell,\ell) \in [0,\ell] \times [0,\ell]$ is composed of four segments of slope respectively $\nu,1,\frac{1}{\nu},1 \in \langle n_1,\dots,n_k\rangle$. It follows that this represents the graph of an element $h \in G$ such that $\lambda(h) = \nu$ and $\rho(h) = 1$.  The right hand side of Figure \ref{tikz} show the graph of $h$ for the generalized Thompson group $F(1,\Z[\frac{1}{6}],\langle 2,3\rangle)$, with $\nu = 2$.

\begin{figure}[h]
\begin{tikzpicture}[xscale=4,yscale=4] \label{Figure}
\draw[shift={+(0.1,0)}][->] (0,0) to (1,0) node[below]{$x$};
\draw[shift={+(0.1,0)}][->] (0,0) to (0,1) node[left]{$y$};
\draw[shift={+(0.1,0)}][very thick, domain=0:1/12] plot (\x,2*\x);
\draw[shift={+(0.1,0)}][very thick, domain=1/12:5/6] plot (\x,\x + 1/12);
\draw[shift={+(0.1,0)}][very thick, domain=5/6:1] plot (\x,1/2*\x+1/2);

\draw[shift={+(0.75,-0.3)}] (0,0) node {$g \in F(1,\Z[\frac{1}{6}],\langle 2,3\rangle)$; $\lambda(g) =2,\rho(g) = \frac{1}{2}$};
\draw[shift={+(2.65,-0.3)}] (0,0) node {$h \in F(1,\Z[\frac{1}{6}],\langle 2,3\rangle)$; $\lambda(h) =2,\rho(h) = 1$};

\draw[shift={+(2,0)}][->] (0,0) to (1,0) node[below]{$x$};
\draw[shift={+(2,0)}][->] (0,0) to (0,1) node[left]{$y$};
\draw[shift={+(2,0)}][very thick, domain=0:5/72] plot (\x,2*\x);
\draw[shift={+(2,0)}][very thick, domain=5/72:25/36] plot (\x,\x + 5/72);
\draw[shift={+(2,0)}][very thick, domain=25/36:5/6] plot (\x,1/2*\x+5/12);
\draw[shift={+(2,0)}][very thick, domain=5/6:1] plot (\x,\x);

\end{tikzpicture}
\captionof{figure}{}
   \label{tikz}
\end{figure}

The proof for the case of $\nu < 1$ is similar, defining this time 
\[ (x_1,y_1)= \Big( \lmfrac{\ell}{n}, \nu \lmfrac{\ell}{n}\Big), \,\  (x_2,y_2)= \Big(\ell -  \nu \lmfrac{\ell}{n},\ell - \lmfrac{\ell}{n}\Big) \] and performing similar elementary calculations. It now follows from the theorem of Bieri--Neumann--Strebel that the set of exceptional characters is nonempty as claimed in the statement.
\end{proof}


\end{document}